\DeclareMathOperator{\Tr}{Tr}
\newtheorem{thm}{Theorem}[section]
\newdefinition{pro}{proposition}[section]
\newdefinition{cor}{Corollary}[section]
\newdefinition{lem}{Lemma}[section]
\newdefinition{definition}{Definition}[section]\newdefinition{rem}{Remark}[section]
\newdefinition{ass}{Assumption}[section]
\newdefinition{prop}{Proposition}[section]
\newdefinition{exam}{Example}[section]
\begin{document}

\begin{frontmatter}
\title{Universal bifurcation scenarios in delay-differential equations with one delay\tnoteref{t1}}

\tnotetext[t1]{This work was supported by the Deutsche Forschungsgemeinschaft (DFG, German Research Foundation), Project No. 411803875.}

\author[labela,labelb,labelc]{Yu Wang}
\author[labela]{Jinde Cao} 
\author[labelb,labele]{Jürgen Kurths}
\author[labelf,labelb]{Serhiy Yanchuk\corref{cor1}}\ead{syanchuk@ucc.ie}

\address[labela]{School of Mathematics, Southeast University,  210096 Nanjing, China}
\address[labelb]{Potsdam Institute for Climate Impact Research,  14473 Potsdam, Germany}
\address[labelc]{Institute of Mathematics,  Humboldt-Universität zu Berlin,  10099 Berlin,  Germany}
\address[labele]{Department of Physics, Humboldt-Universität zu Berlin,  10099 Berlin, Germany}
\address[labelf]{University College Cork, School of Mathematical Sciences, Western Road, Cork, T12 XF62, Ireland}
\cortext[cor1]{Corresponding author}
\journal{Journal of Differential Equations}

\begin{abstract}
    We show that delay-differential equations (DDE) exhibit universal bifurcation scenarios, which are observed in large classes of DDEs with a single delay. Each such universality class has the same sequence of stabilizing or destabilizing Hopf bifurcations. These bifurcation sequences and universality classes can be explicitly described by using the asymptotic continuous spectrum for DDEs with large delays. 
   Here, we mainly study linear DDEs, provide a general transversality result for the delay-induced bifurcations, and consider three most common universality classes. 
    For each of them, we explicitly describe the sequence of stabilizing and destabilizing bifurcations. We also illustrate the implications for a nonlinear Stuart-Landau oscillator with time-delayed feedback. 
\end{abstract}   
\begin{keyword}
    Delay-differential equations; Instability; Universality classifications; Asymptotic continuous spectrum; Hopf bifurcation
\end{keyword}
\end{frontmatter}

\section{Introduction}\label{sec1}
Delay-differential equations (DDEs)  are important mathematical models in many application areas, including optics \cite{Mallet-Paret1989,Erneux2009, Atay2010a,Soriano2013,Yanchuk2019}, physiology and infections disease modelling \cite{Kuang1993, Hartung2006a, Diekmann2007, Muller2015, Agaba2017, Young2019},  mechanics \cite{Tsao1993, Insperger2006, InspergerStepan2011, Otto2013, Otto2015}, neuroscience \cite{Wu2001, Izhikevich2006, Ko2007}, and others. 
More recently, DDEs have become a focus in some areas of machine learning, such as reservoir computing  \cite{appeltantInformationProcessingUsing2011, Grigoryeva,  Larger2017,  Keuninckx2017,  Stelzer2019, Koster2022b} or deep neural networks \cite{Stelzer2021a,Furuhata2021}.

Here we study how time-delay affects the stability of equilibria. This problem commonly arises in the above mentioned applications when the time-delay is a parameter. 
Although the equilibrium of a DDE remains unchanged upon variation of the time-delay, its stability may change. The recent work \cite{Yanchuk2022b} provides the necessary and sufficient conditions for DDEs with multiple delays to be absolutely stable or hyperbolic, i.e., its equilibrium does not have any destabilizing or stabilizing bifurcations as time-delay changes, see also the works \cite{Pontryagin1942, Elsgolz1971a, Brauer1987, Boese1995, Smith2010, Li2016, An2019, Yanchuk2022b} related to this property. This behavior can also be called delay-independent stability.

The DDEs that are not absolutely stable in the sense of \cite{Yanchuk2022b} exhibit sequences of stabilization or destabilization bifurcations as time-delay changes. 
Often, these bifurcations lead to complex dynamical behaviors \cite{ Ikeda1982, Culshaw2003, Guo2008, Erneux2009, Yanchuk2017, Al-Darabsah2020} or to an increasing coexistence of multiple periodic solutions \cite{Yanchuk2009}. 
Moreover, the bifurcation scenarios seem to be similar in many cases, and it seemed to us that different studies had to reinvent the same bifurcation scenarios in different systems over and over again.
We show here that the majority of DDEs with one delay indeed have only  a few bifurcation scenarios, and these scenarios can be explicitly described. 
This leads to the main idea of this paper: introducing the universality classes of linear DDEs, such that each universality class has the same behavior of the critical eigenvalues, which can be explicitly described once for the whole class. 
In a sense, the absolutely hyperbolic DDEs from \cite{Yanchuk2022b} would represent the universality class ``0". 

Our classification relies on the notion of \textit{asymptotic continuous spectrum} (ACS) for DDEs, which was originally introduced for DDEs with large delays \cite{Wolfrum2006,  Yanchuk2010a, Lichtner2011, Sieber2013, Yanchuk2015b, Ruschel2021, poignard_self-induced_2022}, but, as we show here, it plays a crucial role for describing the bifurcation scenarios for arbitrary delays. An important feature of ACS is that it is rather easy to calculate.

The structure of this manuscript is as follows: Section \ref{sec2}  introduces the main notations, including the asymptotic continuous spectrum. 
Section \ref{sec3} gives necessary and sufficient conditions for the occurrence of delay-induced Hopf bifurcations. The transversality theorem gives explicit conditions for the transversality and the direction of change of the critical roots of the characteristic equation with the change of the time delay.
In sections \ref{sec4} to \ref{sec6}, we introduce three basic universality classes of linear DDEs and give necessary and sufficient conditions for a DDE to belong to each class. 
Furthermore, for each class, we give explicit results on the existence and transversality of the critical roots of the characteristic equation. The theory is illustrated with several examples.
In section \ref{sec7}, we show the implications of our results for an example of a nonlinear Stuart-Landau oscillator with time-delayed feedback. A brief discussion and possible further extensions are presented in section \ref{sec8}.

\section{Model and notations}\label{sec2}

We start with the general nonlinear delay differential equation (DDE)
\begin{align}
  \dot x(t)=\mathbf{F}\left(x(t),x(t-\tau)\right), \label{eq:nonlinearDDE}  
\end{align}
where  $\mathbf{F}: \mathbb{R}^n\times \mathbb{R}^n \mapsto \mathbb{R}^n$ is continuously differentiable in its arguments, $x(t)\in\mathbb{R}^n$, and $\tau > 0$ is the time delay.
When studying the linear stability of equilibria the DDE \eqref{eq:nonlinearDDE}, the linearized system 
\begin{align}
     \dot{x}(t)=Ax(t)+Bx(t-\tau)\label{eq:DDE}
\end{align}
is obtained, where  $A = \frac{\partial \mathbf{F}}{\partial x(t)}, B = \frac{\partial \mathbf{F}}{\partial x(t-\tau)} \in\mathbb{R}^{n\times n}$. Although the main subject of this study is the linear DDE \eqref{eq:DDE}, we also illustrate the implications of our results for a nonlinear example in Sec.~\ref{sec7}.

The stability problem for \eqref{eq:DDE} can be reduced to the characteristic quasipolynomial \cite{Hale1993,Smith2010}
\begin{align}
\chi(\lambda) = \det\left[\lambda I-A-Be^{-\lambda\tau}\right]=0.\label{eq:CharEq}
\end{align}
With changing time-delay $\tau\ge 0$, the number of unstable characteristic roots of DDE \eqref{eq:DDE} can change due to the following mechanism.

\begin{definition}[Delay-induced transverse crossing]
    The DDE \eqref{eq:DDE} undergoes \textit{delay-induced transverse crossing} at $\tau=\tau_H$ if there exists a family of roots $\lambda(\tau) = \alpha(\tau) \pm i\mu(\tau)$ of the characteristic equation \eqref{eq:CharEq}  such that $\alpha(\tau_H)=0$ and $\frac{d\alpha(\tau_H)}{d\tau}\ne 0$. 
\end{definition}

We also distinguish between stabilizing and destabilizing transverse crossing.
\begin{definition}[Stabilizing and destabilizing transverse crossing]
    The delay-induced transverse crossing is \textit{stabilizing} if 
    $\frac{d\alpha(\tau_H)}{d\tau} <  0$, and \textit{destabilizing} if $\frac{d\alpha(\tau_H)}{d\tau} > 0$. 
\end{definition}
 
The following definitions are known from the theory of DDEs with large delays \cite{Wolfrum2006,  Lichtner2011, Sieber2013, Yanchuk2015b, Yanchuk2022b}. To provide motivation and a better intuition for these definitions, we apply the following Ansatz
\begin{align*}
\lambda=\frac{\gamma(\omega)}{\tau}+i\omega,
\end{align*}
with $\gamma,\omega\in \mathbb{R}$ in the characteristic equation (\ref{eq:CharEq})
\begin{align}
\det\left[\left(\frac{\gamma(\omega)}{\tau}+i\omega\right) I-A-Be^{-\gamma(\omega)-i\omega\tau}\right]=0.\label{Eq3}
\end{align}
Denoting $\phi(\omega)=\omega\tau$ and assuming a large time-delay $\gamma\ll \tau$, we get  an approximate equation 
\begin{align}
\det\left[i\omega I-A-Be^{-i\phi(\omega)}\right]=0,\label{eq:noname}
\end{align}
which motivates the following definition.

\begin{definition}[Generating polynomial]
We define
\begin{align}
p_\omega(Y) := \det\left[i\omega I-A-BY\right]\label{eq:Y}
\end{align}
to be the \textit{generating polynomial}. 
\end{definition}
Note that $p_\omega(Y)$ is a polynomial in $Y$ and $\omega$ and also $p_\omega(e^{-i\phi(\omega)})=0$ is equivalent to the approximate characteristic equation \eqref{eq:noname}. 

Next, we give the definition of the asymptotic continuous spectrum, i.e., the curves that determine the DDE spectrum for large delays. As we will show in this work, this spectrum plays a crucial role in describing transverse crossings and their type for any finite time-delays.

\begin{definition}[Asymptotic Continuous Spectrum, ACS]
\label{def:1}
The \textit{asymptotic continuous spectrum (ACS)} is given by
\begin{align}
\lambda_\text{ACS}(\omega) & = \left\{ \frac{1}{\tau}\gamma_j(\omega) + i\omega\ :\ \mathbb{R}\mapsto \mathbb{C}, \quad j=1,\dots,m\right\}, \label{eq:ACS-complex}\\
 & \text{where} \quad \gamma_{j}(\omega)  =-\ln\left|Y_{j}(\omega)\right|,\label{eq:ACS}
\end{align}
and $Y_{j}(\omega)$ are roots of the generating polynomial (\ref{eq:Y}) $p_\omega(Y_j(\omega))=0$. The points $\omega$ are excluded, for which the polynomial is degenerate with $p_\omega(Y)\equiv \det\left[i\omega I-A\right]$ for all $Y$. 
\end{definition}
The computation of the ACS is reduced to a simple polynomial root finding with a degree less than or equal to the number of components in the DDE. Note also that the spectrum is symmetric with respect to the complex conjugation for DDEs with real coefficients. The ACS is completely described by the real-valued functions $\gamma_j(\omega)$, which determine the (rescaled) real part of the spectrum.

\section{Critical characteristic roots and transversality theorem}\label{sec3}

One of the most important properties of the ACS is that the roots $\omega_H$ of the functions $\gamma_j(\omega)$ are the only possible crossing frequencies, when considering $\tau$ as a parameter. More precisely, the following transversality theorem holds.

\begin{thm}[\textbf{Transversality Theorem}] \label{thm:Trans}
    Let $\lambda_c=i\omega_H$ $(\omega_H>0)$ be a critical simple characteristic root of the linear DDE \eqref{eq:DDE} for $\tau=\tau_H$. Then there is a delay-induced transverse crossing of this characteristic root at $\tau=\tau_H$ if and only if there is a branch of ACS with $\gamma(\omega_H)=0$ and $\frac{d}{d\omega}\gamma(\omega_H)\ne 0$. Moreover,  
    \begin{itemize}
        \item if $\frac{d}{d\omega}\gamma(\omega_H)<0$, then the  crossing is destabilizing with $\frac{d}{d\tau} \Re [\lambda(\tau_H)]>0$,
    \item if $\frac{d}{d\omega}\gamma(\omega_H)>0$, then the crossing is stabilizing with  $\frac{d}{d\tau} \Re [\lambda(\tau_H)]<0$.
    \end{itemize}
\end{thm}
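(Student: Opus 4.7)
The plan is to relate everything to a single auxiliary object, the bivariate polynomial
\[ q(\lambda, Y) := \det[\lambda I - A - BY], \]
from which both the characteristic and the generating polynomials arise by substitution: $\chi(\lambda) = q(\lambda, e^{-\lambda\tau})$ and $p_\omega(Y) = q(i\omega, Y)$. At the critical point, setting $Y_c := e^{-i\omega_H \tau_H}$ gives $q(i\omega_H, Y_c)=0$ together with $|Y_c|=1$, so $Y_c$ is automatically a root of $p_{\omega_H}$ and the corresponding ACS branch passes through $\gamma(\omega_H)=-\ln|Y_c|=0$. Existence of an ACS branch with $\gamma(\omega_H)=0$ is thus a free consequence of the critical-root hypothesis; the theorem's real content is the link between $\tfrac{d}{d\tau}\Re \lambda$ at $\tau_H$ and $\tfrac{d\gamma}{d\omega}$ at $\omega_H$.

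First I would apply the implicit function theorem to $\chi(\lambda,\tau)=0$ at $(\lambda_c,\tau_H)$, which is legitimate by the simple-root hypothesis. Applying the chain rule through $Y = e^{-\lambda\tau}$ and writing $q_\lambda, q_Y$ for the partial derivatives of $q$ evaluated at $(i\omega_H, Y_c)$, this gives
\[ \frac{d\lambda}{d\tau}\bigg|_{\tau_H} = \frac{i\omega_H\, Y_c\, q_Y}{q_\lambda - \tau_H\, Y_c\, q_Y}. \]
Next I would differentiate the ACS relation $q(i\omega, Y(\omega))\equiv 0$ implicitly and combine with $\gamma(\omega) = -\tfrac12\ln(Y\bar Y)$ to obtain
\[ \frac{d\gamma}{d\omega}\bigg|_{\omega_H} = -\,\Im\!\left(\frac{q_\lambda}{Y_c\, q_Y}\right). \]

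Introducing the single complex number $Z := q_\lambda/(Y_c q_Y)$ puts both formulas into the compact form $d\lambda/d\tau = i\omega_H/(Z-\tau_H)$ and $d\gamma/d\omega = -\Im Z$. Note that the denominator satisfies $Y_c q_Y (Z-\tau_H) = \chi_\lambda(\lambda_c)\ne 0$, so there is nothing to worry about. Rationalizing and taking real parts then produces the single identity
\[ \frac{d}{d\tau}\Re \lambda(\tau_H) = \frac{\omega_H\,\Im Z}{|Z-\tau_H|^2} = -\frac{\omega_H}{|Z-\tau_H|^2}\,\frac{d\gamma}{d\omega}(\omega_H). \]
Since $\omega_H>0$ and the prefactor is strictly positive, this identity simultaneously delivers the equivalence of transverse crossing with $d\gamma/d\omega(\omega_H)\ne 0$ and the sign reversal that produces the destabilizing/stabilizing dichotomy as stated.

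The main obstacle is clean bookkeeping of the chain rule through $Y=e^{-\lambda\tau}$ rather than anything conceptual. One non-degeneracy point deserves a short verification: if $q_Y(i\omega_H,Y_c)=0$, i.e.\ $Y_c$ is a multiple root of $p_{\omega_H}$, the numerator in $d\lambda/d\tau$ vanishes so the crossing fails to be transverse, while the ACS branch is not smoothly parametrized at $\omega_H$ and the hypothesis $d\gamma/d\omega\ne 0$ cannot be satisfied; the equivalence therefore holds trivially in that case. The other degenerate case $p_\omega \equiv \det[i\omega I - A]$ is already excluded in Definition~\ref{def:1}.
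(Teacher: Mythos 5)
Your proposal is correct and follows essentially the same route as the paper: implicit differentiation of $\chi(\lambda,\tau)=q(\lambda,e^{-\lambda\tau})$ at the critical point, implicit differentiation of $q(i\omega,Y(\omega))=0$ for the ACS branch, and combination of the two through the partial derivatives $q_\lambda,q_Y$ to obtain $\frac{d}{d\tau}\Re\lambda(\tau_H)=-\omega_H\,\gamma'(\omega_H)/\bigl|Z-\tau_H\bigr|^2$, which is exactly the paper's formula \eqref{eq:dReal} with $Z=\partial_\omega\phi-i\partial_\omega\gamma$. Your packaging via the single complex number $Z$ and the explicit remark on the degenerate case $q_Y=0$ are minor streamlinings, not a different argument.
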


\begin{proof}
Firstly we note that $\lambda_c=i\omega_H$ being the characteristic root is equivalent to
\begin{equation}
    0 = \det\left[i\omega_H I -A-Be^{-i\omega_H \tau_H}\right]=
    \det\left[i\omega_H I -A-Be^{-i\phi_H}\right] = p_{\omega_H}\left(e^{-i\phi_H}\right),
\end{equation}
or $Y(\omega_H)=e^{-i\phi_H}$ and $\gamma(\omega_H)=0$. The existence of a nontrivial branch $Y(\omega)$ of ACS follows from the assumption that $\lambda_c$ is simple. Hence the necessary condition holds. Let us show the sufficient condition.

Using the following notation
\begin{equation}
\label{eq:PzY}
P(z,Y)=\det\left[z I -A-BY\right]=0
\end{equation}
and by differentiating the characteristic quasipolynomial \eqref{eq:CharEq}, we get
\[
\partial_{\tau}\lambda=-\frac{\partial_{\tau}\chi}{\partial_{\lambda}\chi}=-\frac{-\lambda e^{-\lambda\tau}\partial_{Y}P}{\partial_{z}P-\tau e^{-\lambda\tau}\partial_{Y}P}.
\]
The denominator of the obtained expression is nonzero for a simple root $\lambda=i\omega_H$.
Next, we evaluate the above expression at $\lambda_c=i\omega_{H}$ and  the time-delay $\tau=\tau_H$
\[
\left.\partial_{\tau}\lambda_c\right|_{\lambda_c=i\omega_{H},\tau=\tau_{H}}=-\frac{-i\omega_{H}e^{-i\phi_{H}}\partial_{Y}P}{\partial_{z}P-\tau_{H}e^{-i\phi_{H}}\partial_{Y}P}=\frac{i\omega_{H}}{\frac{\partial_{z}P}{\partial_{Y}P}e^{i\phi_{H}}-\tau_{H}}
=
-\frac{i\omega_{H}}{\left.\partial_{z}Y'\right|_{z=i\omega_{H}}e^{i\phi_{H}}+\tau_{H}},
\]
where $\phi_H=\omega_H \tau_H$ and $Y'$ is a root of the polynomial \eqref{eq:PzY}, i.e. $p(z,Y')=0$. Note that $Y(\omega)=Y'(i\omega)$ is the root of the generating polynomial \eqref{eq:Y}. 
Using $\partial_{\omega}Y=i\partial_{z}Y'$, we have
\[
\left.\partial_{\tau}\lambda_c\right|{}_{\lambda_c=i\omega_{H},\tau=\tau_{H}}=-\frac{i\omega_{H}}{-i\left.\partial_{\omega}Y\right|_{\omega=\omega_{H}}e^{i\phi_{H}}+\tau_{H}}.
\]
Further we use $ Y(\omega)=e^{-\gamma(\omega)-i\phi(\omega)}$ and  obtain
\[
\left.\partial_{\omega}Y(\omega)\right|_{\omega=\omega_{H}}=-e^{-i\phi_{H}}\left[\partial_\omega \gamma\left(\omega_{H}\right)+i\partial_\omega \phi\left(\omega_{H}\right)\right],
\]
and hence
\[
\left.\partial_{\tau}\lambda_c\right|_{\lambda_c=i\omega_{H},\tau=\tau_{H}}=-\frac{i\omega_{H}}{\tau_{H}-\partial_\omega\phi\left(\omega_{H}\right)+i\partial_\omega \gamma\left(\omega_{H}\right)}.
\]
Taking the real part
\begin{align}
    \left. \frac{d}{d\tau} \Re [\lambda(\tau) ] 
\right|_{\lambda=\lambda_c,\tau=\tau_H}=
\frac{-\omega_{H}\partial_\omega \gamma\left(\omega_{H}\right)}{\left(\tau_{H}-\partial_\omega \phi\left(\omega_{H}\right)\right)^{2}+\left(\partial_\omega \gamma\left(\omega_{H}\right)\right)^{2}}.\label{eq:dReal}
\end{align}
We observe that the sign of the term $-\omega_{H}\partial_\omega \gamma\left(\omega_{H}\right)$ determines the transversality and the direction of the crossing. 
Since $\omega_{H}>0$, we have $\left. \frac{d}{d\tau} \Re [\lambda(\tau) ] 
\right|_{\lambda=\lambda_c,\tau=\tau_{H}}>0$ for $\partial_\omega \gamma(\omega_{H})<0$ and  $\left. \frac{d}{d\tau} \Re [\lambda(\tau) ] 
\right|_{\lambda=\lambda_c,\tau=\tau_{H}}<0$ for $\partial_\omega \gamma(\omega_{H})>0$.
The proof is complete.
\end{proof}

Note that we are considering the situation where the coefficients of the DDE \eqref{eq:DDE} are real, so the critical eigenvalues appear in pairs $\pm i\omega_H$. Therefore, it is sufficient to consider $\omega_H>0$ to determine the crossing direction.

As the delay-induced transverse crossings can only be caused by the ACS, the following theorem follows from the above result. 

\begin{thm}[\textbf{Universal sequence of crossing events / Bifurcation Theorem}]\label{thm:Hopf}
Let a curve of the ACS crosses the imaginary axis at $i\omega_H$, i.e., $\gamma_j(\pm \omega_H)=0$ for some $j\in\mathbb{N}$ and $\frac{d}{d\omega}\gamma_j(\omega_H)\ne 0$. Then the delay-induced crossings of the characteristic multipliers in system \eqref{eq:DDE} occur with $\lambda_H=i\omega_H$ at the following values of time-delays
\begin{equation}
\label{eq:generalHopf}    
\tau_k = \frac{1}{\omega_H}(\phi_H +2\pi k), \quad k\in \mathbb{Z},
\end{equation}
where 
\begin{equation}
    \label{eq:generalphiH}
    \phi_{H}=-\arg\left[Y_{j}(\omega_{H})\right],
\end{equation}
and $Y_{j}(\omega_{H})$ is a corresponding root of the generating polynomial $p_{\omega_H}(Y)=0$. This crossing is destabilizing, if $\frac{d}{d\omega}\gamma_j(\omega_H)<0$ and  stabilizing if $\frac{d}{d\omega} \gamma_j(\omega_H)>0$.


If there are no other branches of ACS with $\gamma(\omega)=0$, the above mentioned crossings are the only delay-induced transverse crossings in \eqref{eq:DDE}.
\end{thm}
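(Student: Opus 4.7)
The plan is to reduce the statement to a purely algebraic condition on the generating polynomial, then invoke the already-proved Transversality Theorem (Theorem 3.1) to get the direction of crossing and the absence of other crossings.

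First, I would rewrite the condition that $\lambda = i\omega_H$ be a characteristic root of \eqref{eq:DDE} at some $\tau$. By \eqref{eq:CharEq} this is
\begin{equation*}
\det\bigl[i\omega_H I - A - B e^{-i\omega_H \tau}\bigr] = p_{\omega_H}\bigl(e^{-i\omega_H\tau}\bigr) = 0,
\end{equation*}
so $e^{-i\omega_H\tau}$ must coincide with one of the roots $Y_j(\omega_H)$ of the generating polynomial \eqref{eq:Y}. The hypothesis $\gamma_j(\omega_H) = -\ln|Y_j(\omega_H)| = 0$ means precisely $|Y_j(\omega_H)| = 1$, so we may write $Y_j(\omega_H) = e^{-i\phi_H}$ with $\phi_H = -\arg Y_j(\omega_H)$, which is exactly \eqref{eq:generalphiH}. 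Equating $e^{-i\omega_H\tau} = e^{-i\phi_H}$ then forces $\omega_H\tau \equiv \phi_H \pmod{2\pi}$, yielding the countable family
\begin{equation*}
\tau_k = \frac{\phi_H + 2\pi k}{\omega_H}, \qquad k \in \mathbb{Z},
\end{equation*}
and these are exactly the positive delays of interest (for $k$ large enough if $\phi_H < 0$).

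Next, at each such $\tau_k$, the pair $(\omega_H,\tau_k)$ satisfies the hypotheses of Theorem 3.1: $i\omega_H$ is a characteristic root, and by assumption the branch $\gamma_j$ of the ACS satisfies $\gamma_j(\omega_H) = 0$ and $\partial_\omega\gamma_j(\omega_H) \ne 0$. Invoking Theorem 3.1 at $\tau = \tau_k$ gives a delay-induced transverse crossing, and formula \eqref{eq:dReal} shows that the sign of $\frac{d}{d\tau}\Re\lambda$ coincides with the sign of $-\omega_H\,\partial_\omega\gamma_j(\omega_H)$. Since $\omega_H > 0$, the crossing is destabilizing whenever $\partial_\omega\gamma_j(\omega_H) < 0$ and stabilizing whenever $\partial_\omega\gamma_j(\omega_H) > 0$, as claimed. (Simplicity of the root is inherited from $\partial_\omega\gamma_j(\omega_H)\ne 0$, which forces the denominator in \eqref{eq:dReal} to be strictly positive.)

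Finally, for the uniqueness clause: suppose a delay-induced transverse crossing of \eqref{eq:DDE} occurs at some $(\tilde\omega_H,\tilde\tau_H)$ with $\tilde\omega_H > 0$. By the necessary part of Theorem 3.1, there must exist a branch of the ACS with $\gamma(\tilde\omega_H) = 0$ and $\frac{d}{d\omega}\gamma(\tilde\omega_H) \ne 0$. If no branches of the ACS other than $\gamma_j$ vanish, then necessarily $\tilde\omega_H = \pm\omega_H$, and the derivation above shows $\tilde\tau_H$ must belong to the family $\{\tau_k\}$. This completes the argument; the main bookkeeping point — and the only subtle step — is matching the mod-$2\pi$ ambiguity of $\arg Y_j(\omega_H)$ with the choice of integer $k$, which is harmless since shifting the branch of the argument by $2\pi$ merely reindexes the sequence $\tau_k$.
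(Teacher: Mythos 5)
Your proposal is correct and follows essentially the same route as the paper: observe that $e^{-i\omega_H\tau_k}=e^{-i\phi_H}=Y_j(\omega_H)$ makes $i\omega_H$ a root of the characteristic quasipolynomial at each $\tau_k$, then invoke the Transversality Theorem (both its sufficient direction for the crossing type via the sign of $-\omega_H\,\partial_\omega\gamma_j(\omega_H)$, and its necessary direction for the uniqueness clause). The paper's own proof is just a one-line version of this; your write-up merely fills in the bookkeeping.
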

\begin{proof}
    The proof follows from the observation that 
    $$
        \det\left[i\omega_H I -A-Be^{-i\omega_H \tau_k}\right]=
        \det\left[i\omega_H I -A-Be^{-i\phi_H}\right] = p_{\omega_H}\left(e^{-i\phi_H}\right) = p_{\omega_H}(Y_j(\omega_H))=0,
    $$
and the transversality Theorem \ref{thm:Trans}.
\end{proof}

We have shown that a transverse intersection of the critical characteristic roots with changing $\tau$ is only possible at the intersection points of ACS with the imaginary axis. 
Besides the critical roots mentioned in Theorem~\ref{thm:Hopf}, other types of critical roots can appear in system \eqref{eq:DDE}, but they are all not delay-induced, and they do not change with time delay $\tau$ and do not lead to bifurcations. For example, if $i\omega\in \sigma(A)$ with the eigenvector $v_\omega$ and $v_\omega\in\ker B$, then $i\omega$ is the non-delay-induced characteristic root of the quasipolynomial \eqref{eq:CharEq} for all time delays $\tau$. It is an interesting open question whether other cases are possible, but we do not address this question in this paper. 

\section{Universality class I}\label{sec4} 

Starting from this section, we will define universality classes for linear DDEs that exhibit  universal bifurcation scenarios. These universality classes are determined solely by the qualitative features of their ACS. In particular, the ACS of class I is shown schematically in Fig.~\ref{fig:classI}. A curve of such a spectrum crosses the imaginary axis at two points $\pm i\omega_H$ and has a single unstable component for $\omega \in (-\omega_{H},\omega_H)$, while the remaining parts of the ACS are stable. Such a situation is common in applications \cite{Mackey1977a,Weicker2012,Yanchuk2015b,Yanchuk2017}.
Note that the classification is given for linear DDEs. Given a nonlinear DDE, it is natural to apply the classification to a particular equilibrium of a nonlinear system and its linearization. 
\begin{figure}[H]
		\centering		
        \subfigure{\includegraphics[width=4.0cm]{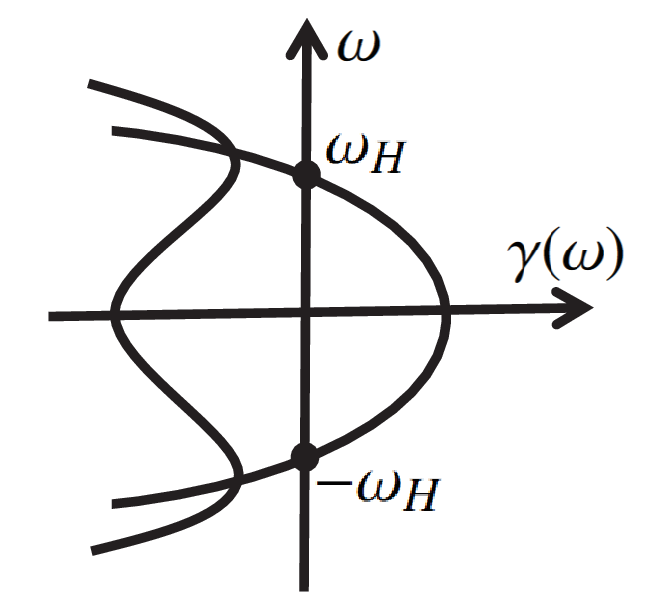}}
		\caption{Schematic representation of class I asymptotic continuous spectrum (ACS). $\lambda=\pm i\omega_H$ are the only possible critical characteristic roots.
        }
		\label{fig:classI}
	\end{figure}

\begin{definition}[Class I asymptotic spectrum] \label{as:uc1} 
    We define the asymptotic spectrum to be of \textit{universality class I ACS} if there exist $\omega_{H}>0$ and $j\in \mathbb{N}$ such that one branch of the ACS satisfies $\gamma_{j}\left(\pm\omega_{H}\right)=0$,  $\gamma_{j}(\omega)>0$ for all $\omega \in (-\omega_{H},\omega_H)$, and $\gamma_{j}(\omega)<0$ for all $\omega \not\in [-\omega_{H},\omega_H]$.
    Also, $\gamma_{k}\left(\omega\right)\ne 0$
    for all $k\ne j$ and $\omega\in\mathbb{R}$. 
\end{definition}

\begin{definition}[Class I DDEs]
    We define the linear DDE \eqref{eq:DDE} to be of \textit{universality class I} if it has the ACS of universality class I. 
\end{definition}

The structure of the ACS and the Bifurcation Theorem \ref{thm:Hopf} lead to the following explicit and complete description of the behavior of critical characteristic roots in the class I DDEs.    

\begin{cor}[Bifurcation scenario in class I DDEs] \label{thm:1} 
Assume that a linear DDE \eqref{eq:DDE} is of universality class I. 
Then the system has a sequence of \textit{destabilizing} delay-induced crossings of the critical characteristic roots with $\lambda_c=\pm i\omega_{H}$ for the following time-delays 
\begin{align}
\tau=\tau_{k}=\frac{1}{\omega_{H}}\left(\phi_{H}+2\pi k\right),\quad k=0,1,2,\dots,\label{eq:tauHopf}
\end{align}
where $\phi_{H}$ is defined by Eq.~\eqref{eq:generalphiH}. Moreover, there are no other transverse delay-induced crossings of the characteristic multipliers in this system. 
\end{cor}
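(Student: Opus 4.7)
The plan is to apply the Bifurcation Theorem (Theorem~\ref{thm:Hopf}) directly to the distinguished branch $\gamma_j$ guaranteed by Definition~\ref{as:uc1}. First I would read off the sign of $\frac{d}{d\omega}\gamma_j(\omega_H)$ from the Class~I sign pattern. Since $\gamma_j(\omega_H)=0$ with $\gamma_j(\omega)>0$ just to the left of $\omega_H$ and $\gamma_j(\omega)<0$ just to the right, and since $\gamma_j$ is smooth near $\omega_H$ (as $-\ln|Y_j(\omega)|$ with $Y_j$ the smooth branch of roots of $p_\omega(Y)$ singled out by the implicit function theorem at the simple root $Y_j(\omega_H)$), $\gamma_j$ is strictly decreasing across $\omega_H$, which gives $\frac{d}{d\omega}\gamma_j(\omega_H)<0$.

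With the sign of the derivative in hand, Theorem~\ref{thm:Hopf} applied at $\omega_H>0$ produces a sequence of \emph{destabilizing} delay-induced crossings at exactly the delays $\tau_k=(\phi_H+2\pi k)/\omega_H$ of Equation~\eqref{eq:generalHopf}, with $\phi_H=-\arg[Y_j(\omega_H)]$. Choosing the branch of $\arg$ so that $\phi_H\in[0,2\pi)$ restricts $k$ to the non-negative integers and recovers precisely Equation~\eqref{eq:tauHopf}. The matching crossings of the conjugate root $\lambda_c=-i\omega_H$ occur simultaneously at each $\tau_k$ by the reality of $A$ and $B$; equivalently, applying Theorem~\ref{thm:Hopf} at $-\omega_H$ produces the conjugate family, the sign reversal of $\frac{d}{d\omega}\gamma_j$ being compensated by the factor $-\omega_H$ in Equation~\eqref{eq:dReal}.

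Exhaustiveness is the content of the final clause of Theorem~\ref{thm:Hopf}: every delay-induced transverse crossing must occur at a frequency at which some branch of the ACS vanishes. Class~I stipulates $\gamma_k(\omega)\neq 0$ for all $k\neq j$ and all $\omega\in\mathbb{R}$, while $\gamma_j$ vanishes only at $\pm\omega_H$. Hence no other frequency can host a crossing, and Equation~\eqref{eq:tauHopf} is the complete list of delay-induced transverse crossings in the system.

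The only genuine obstacle is justifying the strict inequality $\frac{d}{d\omega}\gamma_j(\omega_H)\neq 0$ from the prescribed sign change rather than assuming it outright, since a priori the strict sign change is compatible with a higher-order odd tangency. The smoothness coming from the implicit function theorem applied to $p_\omega(Y)$ at the simple root $Y_j(\omega_H)$ reduces the issue to ruling out such higher-order contact, which is implicit in the generic Class~I geometry depicted in Fig.~\ref{fig:classI}; once this regularity is acknowledged, the corollary reduces to a direct invocation of Theorems~\ref{thm:Trans} and~\ref{thm:Hopf}.
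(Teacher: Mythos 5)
Your proposal matches the paper's own (implicit) argument: the corollary is presented as a direct consequence of Theorem~\ref{thm:Hopf} applied to the single distinguished branch $\gamma_j$, whose Class~I sign pattern forces $\frac{d}{d\omega}\gamma_j(\omega_H)<0$ and hence destabilizing crossings at the delays \eqref{eq:tauHopf}, with exhaustiveness supplied by the final clause of that theorem and the requirement $\gamma_k(\omega)\neq 0$ for $k\neq j$. The regularity caveat you raise --- that Definition~\ref{as:uc1}, unlike the Class~II definition, does not explicitly demand $\frac{d}{d\omega}\gamma_j(\omega_H)\neq 0$, so a higher-order odd tangency is not formally excluded --- is a genuine subtlety that the paper also leaves implicit, so your treatment is if anything slightly more careful.
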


In the following, we present several types of DDEs belonging to the universality class I. In particular, we will see that scalar DDEs are always of class I or class 0, where 0 stands for the absolute hyperbolicity \cite{Yanchuk2022b}. 

\subsection{Scalar DDEs}\label{subsec3-1}

We start with the following scalar equation
\begin{equation}
    \dot{x}(t)=ax(t)+bx(t-\tau),\quad a,b\in\mathbb{R}.\label{eq:DDE-scalar}
\end{equation}
The corresponding characteristic equation is 
\begin{equation}
\lambda-a-be^{-\lambda\tau}=0,\label{eq:ce-scalar}
\end{equation}
and the generating polynomial \eqref{eq:Y} $ i\omega-a-bY=0$ is linear
with the single root 
\[
Y=\frac{i\omega-a}{b}.
\]
Hence, the ACS consists of one curve $\lambda_{\text{ACS}}(\omega)=\frac{\gamma(\omega)}{\tau} +i\omega$, where
\begin{equation}
\gamma\left(\omega\right)=-\ln\left|\frac{i\omega-a}{b}\right|=-\frac{1}{2}\ln\frac{a^{2}+\omega^{2}}{b^{2}}.\label{eq:ac-scalar}
\end{equation}
Function \eqref{eq:ac-scalar} is even and has a single maximum $ \gamma_{\max}=\gamma(0)=-\ln\left|\frac{a}{b}\right|$, see Fig. \ref{fig:ACS-scalar}. 
\begin{figure}[H]
		\centering		\subfigure{\includegraphics[width=7cm]{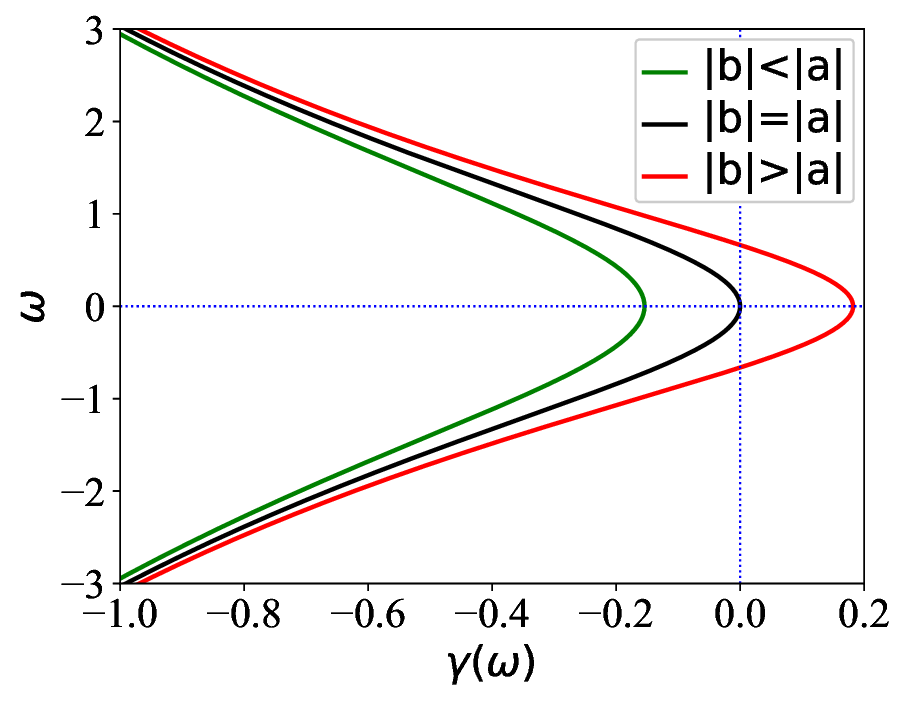}}
		\caption{Asymptotic continuous spectrum for the scalar DDE \eqref{eq:ce-scalar}.
		\label{fig:ACS-scalar}
        }
	\end{figure}
 
In particular, Definition~\ref{as:uc1} is satisfied for $\gamma_{\max}>0$,
or, equivalently, for $|b|>|a|$. In this case, system \eqref{eq:DDE-scalar}
is an example of the  universality class I DDE. 

The value of $\omega_{H}$ is found from the condition $\gamma\left(\omega_{H}\right)=0$, leading to 
\[
\omega_{H}=\sqrt{b^{2}-a^{2}},
\]
where we have taken a positive root.  The phase $\phi_{H}$ is given by Eq.~\eqref{eq:generalphiH} 
\[
\phi_{H}=-\arg\left[\frac{i\omega_{H}-a}{b}\right]=\arg\left[-\frac{a}{b}-i\sqrt{1-\left(\frac{a}{b}\right)^{2}}\right].
\]

According to Corollary~\ref{thm:1}, transverse destabilizing crossings or characteristic roots of \eqref{eq:ce-scalar} with $\lambda_c = \pm i\omega_{H}$ occur for the following time-delays
\begin{align}
 & \tau_{k}=\frac{1}{\omega_{H}}(\phi_{H}+2\pi k)=\nonumber \\
 & =\frac{1}{\sqrt{b^{2}-a^{2}}}\left(\arg\left[-\frac{a}{b}-i\sqrt{1-\left(\frac{a}{b}\right)^{2}}\right]+2\pi k\right),\quad k=0,1,2,\cdots.\label{eq:tauk_scalar}
\end{align}

Figure~\ref{Fig:Scal} shows the spectrum for different values of the critical delays under the condition $|a|<|b|$. 
The expression \eqref{eq:tauk_scalar} leads to the following explicit result for the dimensionality of the unstable manifold for \eqref{eq:DDE-scalar}.

\begin{cor}[Dimension of the unstable manifold in a scalar DDE]
  If $|b|>|a|$, the dimension $D_{\text{u}}$ of the unstable manifold
of the equilibrium in system \eqref{eq:DDE-scalar} is given by the integer number
\begin{equation}
D_{\text{u}}=2\left\lceil \frac{1}{2\pi}\left(\tau\omega_{H}-\phi_{H}\right)\right\rceil +\nu=2\left\lceil \frac{1}{2\pi}\left(\tau\sqrt{b^{2}-a^{2}}-\arg\left[-\frac{a}{b}-i\sqrt{1-\left(\frac{a}{b}\right)^{2}}\right]\right)\right\rceil +\nu,\label{eq:Duu}
\end{equation}
where $\arg(\cdot)$ is the minimal positive argument of the complex
number, $\left\lceil \cdot\right\rceil $ is the ceiling function,
and 
\[
\nu=\begin{cases}
1,\quad\text{ for }a+b>0,\\
0,\quad\text{ for }a+b<0.
\end{cases}
\]

If $|b|<|a|$, it holds $D_{\text{u}}=\nu$. 
\end{cor}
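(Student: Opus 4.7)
The plan is to count the dimension $D_u(\tau)$ of the unstable manifold, i.e. the number of roots of the characteristic equation \eqref{eq:ce-scalar} with positive real part, by starting at $\tau=0$ and tracking the jumps caused by the delay-induced crossings identified in Corollary~\ref{thm:1}.

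First, at $\tau=0$ the scalar DDE \eqref{eq:DDE-scalar} reduces to the ODE $\dot x = (a+b)x$ whose single eigenvalue is $\lambda = a+b$, so $D_u(0) = \nu$ directly from the definition of $\nu$. For $\tau > 0$ the roots of \eqref{eq:ce-scalar} depend continuously on $\tau$, and only finitely many of them lie in any right half-plane $\{\mathrm{Re}\,\lambda > c\}$; consequently $D_u(\tau)$ is integer-valued and can change only when a root crosses the imaginary axis.

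Next, in the case $|b|>|a|$ the scalar DDE is of universality class I (as already established by the explicit computation of the ACS in Section~\ref{subsec3-1}), so Corollary~\ref{thm:1} provides the complete list of delay-induced transverse crossings: a complex-conjugate pair $\pm i\omega_H$ moves from $\{\mathrm{Re}\,\lambda < 0\}$ into $\{\mathrm{Re}\,\lambda > 0\}$ at each $\tau_k = (\phi_H + 2\pi k)/\omega_H$, $k=0,1,2,\ldots$, contributing $+2$ to $D_u$ and no other jumps. Hence for $\tau$ not at a bifurcation point,
\[
D_u(\tau) \;=\; \nu + 2\,\#\{k \in \mathbb{Z}_{\geq 0} : \tau_k < \tau\}.
\]
Using that $\phi_H \in (0,2\pi)$ by the minimal-positive-argument convention, an elementary count gives $\#\{k \geq 0 : \tau_k < \tau\} = \lceil (\tau\omega_H - \phi_H)/(2\pi) \rceil$, which yields \eqref{eq:Duu}. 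In the complementary case $|b|<|a|$, the ACS function $\gamma(\omega) = -\tfrac{1}{2}\ln\frac{a^2+\omega^2}{b^2}$ is strictly negative (since its maximum $-\ln|a/b|$ is negative), so by Theorem~\ref{thm:Trans} no transverse crossings occur, and $D_u(\tau) = D_u(0) = \nu$ for all $\tau \geq 0$.

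The step I expect to require the most care is the ``no eigenvalues from infinity'' argument needed to rule out a change in $D_u$ other than through the crossings of Corollary~\ref{thm:1}. For equation \eqref{eq:ce-scalar} this is easy: any root with $\mathrm{Re}\,\lambda \geq 0$ satisfies $|\lambda - a| = |b|\,e^{-\tau\,\mathrm{Re}\,\lambda} \leq |b|$, giving the a priori bound $|\lambda| \leq |a|+|b|$. Hence all candidates for unstable roots live in a fixed compact set, and a standard continuity/compactness argument confirms that $D_u(\tau)$ is constant between the $\tau_k$'s. The remainder of the proof is routine bookkeeping with the ceiling function.
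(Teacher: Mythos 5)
Your proof is correct and follows essentially the same route as the paper: start from $D_{\text{u}}(0)=\nu$, invoke the class-I bifurcation corollary to enumerate the destabilizing crossings at the $\tau_k$, and convert the count into the ceiling expression. Your a priori bound $|\lambda|\le |a|+|b|$ on unstable roots makes explicit what the paper leaves implicit; the paper's only extra remark is that $\lambda=0$ cannot be a characteristic root when $|b|>|a|$, which rules out a real root crossing through the origin outside the $\pm i\omega_H$ mechanism.
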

\begin{proof}
    Let us set $\tau=0$. Then obviously $D_{\text{u}}=\nu$. 
Consider now the case $|b|<|a|$. In this case, the ACS does not cross
the imaginary axis and, hence, there are no transverse crossings and $D_{\text{u}}$ does not change with $\tau$. 

In the case $|b|>|a|$, Theorem \ref{thm:1} describes all possible
transverse crossings. No other (de)stabilization is possible, since $\lambda=0$ cannot be the characteristic root for $|b|>|a|$. 
Hence, at each $\tau_{k}$ a pair characteristic roots destabilizes, starting from $\tau_{0}$.
This implies that 
\begin{equation}
D_{\text{u}}=2(k+1)+\nu\quad \text{for}\quad  \tau_{k}<\tau\le\tau_{k+1}.
\label{eq:Du}
\end{equation}
 The latter inequality can be written
as 
\[
\frac{1}{\omega_{H}}\left(\phi_{H}+2\pi k\right)<\tau\le\frac{1}{\omega_{H}}\left(\phi_{H}+2\pi\left(k+1\right)\right)
\]
or
\begin{equation}
k<\frac{1}{2\pi}\left(\omega_{H}\tau-\phi_{H}\right)\le k+1.\label{eq:Du1}
\end{equation}
The inequality \eqref{eq:Du1} together with \eqref{eq:Du} implies
\eqref{eq:Duu}.
\end{proof}

\begin{figure}
\centering
\subfigure{\includegraphics[width=6.2cm]{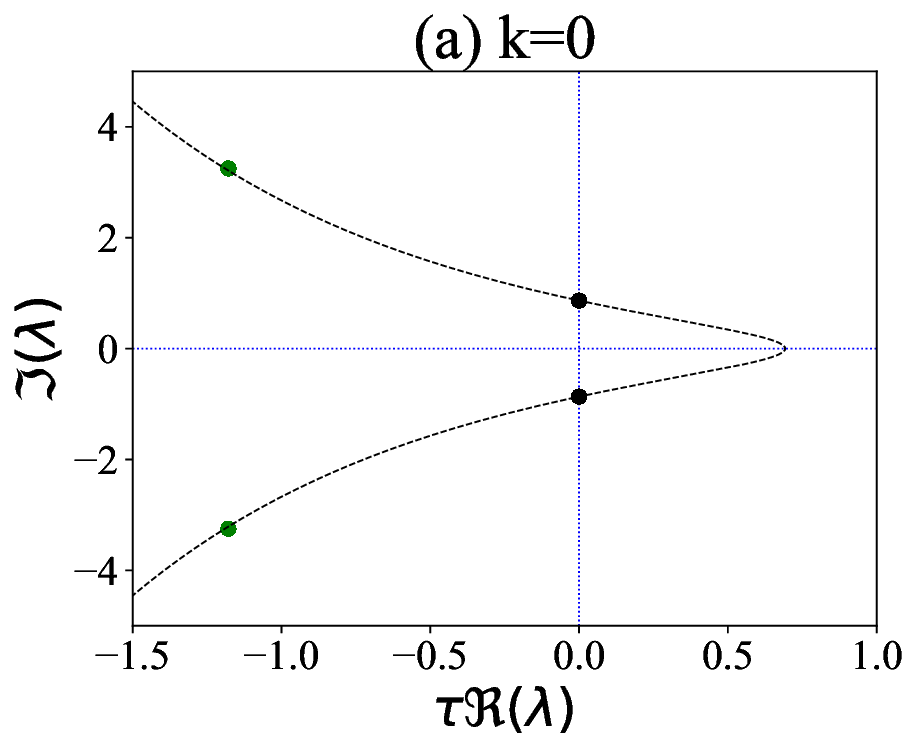}}
\subfigure{\includegraphics[width=6.2cm]{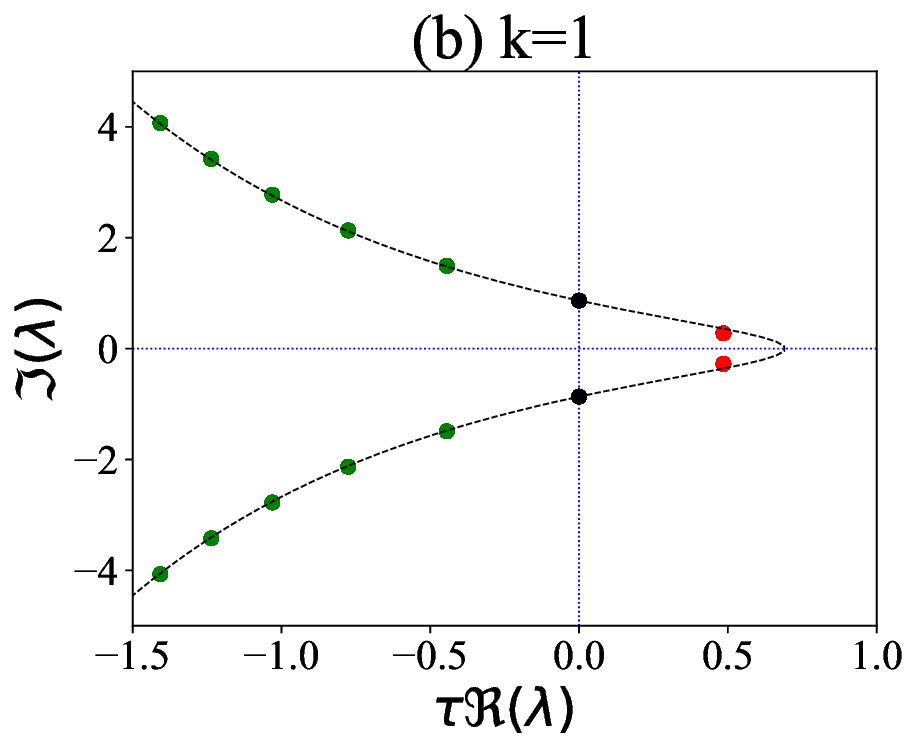}}
\subfigure{\includegraphics[width=6.2cm]{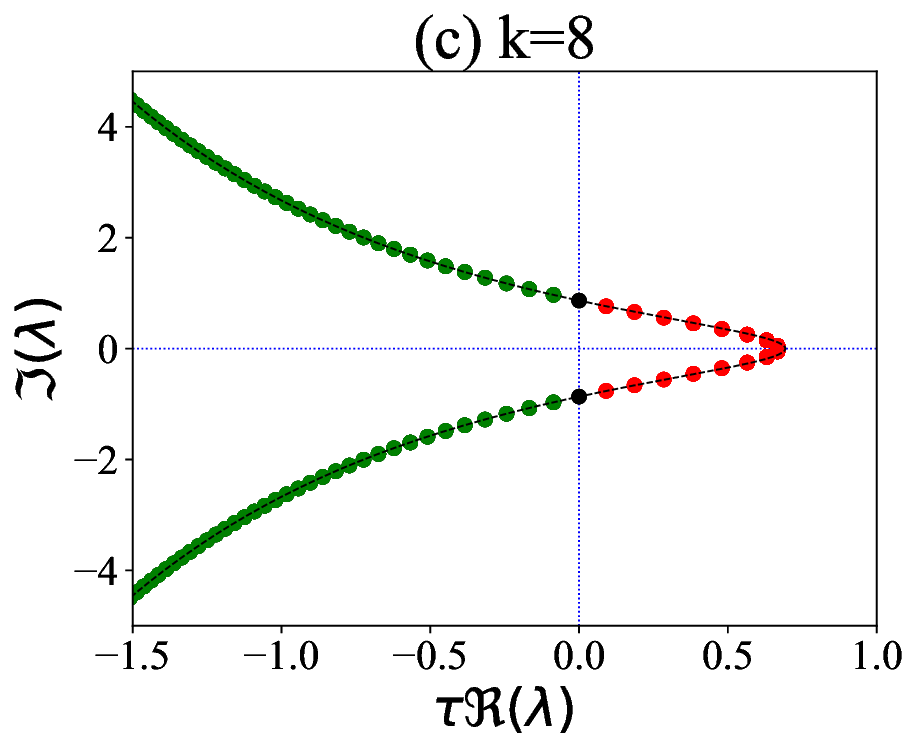}}
\caption{
The spectrum of the scalar DDE \eqref{eq:DDE-scalar} with $a=-0.5$, $b=-1$, and (a) $\tau=\tau_0=2.4184$; (b) $\tau=\tau_1=9.6736$; (c) $\tau=\tau_8=60.4600$, where $\tau_k$ is given by \eqref{eq:tauk_scalar}. The dashed line denotes the curve of the asymptotic continuous spectrum.
The case (a) corresponds to the first pair or eigenvalues becomes critical, (b) the second pair, and (c) to the case when 8 pairs of eigenvalues are unstable and the 9th pair is critical. 
The critical frequency and the phase are $\omega_{H}=0.866$ and $\phi_{H}=2.0944$. 
}
\label{Fig:Scal}
\end{figure}

\subsection{Two-variable linear DDE}\label{subsec3-2}
Here we provide conditions for two-variable DDEs to belong to the universality class I and thus to possess the bifurcation ``scenario" as described in Corollary~\ref{thm:1}.
Consider the two-variable linear equation \eqref{eq:DDE} with 
\begin{align}
A = 
\left(\begin{array}{ll}
a_{11} & a_{12}\\
a_{21} & a_{22}
\end{array}\right),
\quad 
B = \left(\begin{array}{ll}
b_{11} & b_{12}\\
b_{21} & b_{22}
\end{array}
\right),\label{eq:TwovarDDE}
\end{align}
and $a_{ij},b_{ij}\in\mathbb{R},i,j=1,2$. 
The corresponding generating polynomial \eqref{eq:Y} reads 
\begin{equation}
 \det(B)Y^{2}+\left[C-i\omega \Tr(B)\right]Y+\det(A)-i\omega \Tr(A)-\omega^{2}=0,
\label{eq:YTwo}
\end{equation}
where 
\begin{equation}
\label{eq:C}
C=\det(A+B)-\det(A)-\det(B).
\end{equation}

We will consider the cases $\det (B) = 0$ and $\det (B) \ne 0$ separately. First of all we exclude the case $C=\Tr (B) = \det (B) = 0$. Then, the characteristic equation \eqref{eq:CharEq} is reduced to 
\begin{align*}
    \det (A) - \lambda \Tr (A) + \lambda^2 = 0,
\end{align*}
which is simply the characteristic equation for the instantaneous part, and the time-delay has no effect on the spectrum. Therefore, in the following we assume that $C$, $\Tr (B)$, and $\det (B)$ do not vanish simultaneously. 

\textbf{Case 1 :} If $\det(B)=0$, then the single root of the generating polynomial \eqref{eq:YTwo} is
	\begin{equation}
		Y(\omega) = \frac{i\omega \Tr(A)  +\omega^{2} -\det(A)}
		{C-i\omega \Tr(B)},\label{eq:Twovar_Y1}
	\end{equation}
	and the single curve of the ACS is given by 
	\begin{equation}
		\gamma(\omega) = 
		-  \ln
		\left| \frac{i\omega \Tr(A)  +\omega^{2} -\det(A)}
		{C-i\omega \Tr(B)}
		\right|. \label{eq:TwoVar_gamma1}
	\end{equation}
	The value of $\omega_H$ is found from the condition $\gamma(\omega_{H})=0$, which leads to
 \begin{equation}
 \label{eq:omega12}
     \omega_{H}^{4}+\omega_{H}^{2}\left[(\Tr(A))^2-(\Tr(B))^2-2\det(A)\right]+(\det(A))^2-C^{2}=0,
 \end{equation}
 and
 \begin{equation}
    \omega_{H}^{2}=-T_{AB}+\det(A)\pm\sqrt{2\det(A)T_{AB}+T_{AB}^{2}+C^{2}},\quad 
 T_{AB}:=\frac{(\Tr(A))^2-(\Tr(B))^2}{2}. \label{eq:TwoVar_omega}
 \end{equation}
 The phase $\phi_{H}$ is 
 \begin{align}
		\phi_{H} =-\arg\left[\frac{i\omega_H \Tr(A)  +\omega^{2}_H -\det(A)}
		{C-i\omega_H \Tr(B)}\right].\label{eq:TwoVar_phi}
	\end{align}
 The DDE \eqref{eq:TwovarDDE} with $\det (B) = 0$, is of class I if and only if the roots \eqref{eq:TwoVar_omega} are real and simple. It is straightforward to check that the latter is equivalent to the condition $(\det (A))^2 - C^2<0$. Hence, we obtain the following lemma:
	\begin{lem}[Condition for two-variable DDE with $\det B=0$ to be class I]
		\label{lem:3-1}
		The DDE \eqref{eq:TwovarDDE} with $\det (B) = 0$ is of class I if and only if 
  \begin{equation} 
        |\det (A)|< |C|.
  \end{equation}
	\end{lem}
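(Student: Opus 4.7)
The plan is to exploit the fact that $\det B = 0$ reduces the generating polynomial to an equation linear in $Y$, so the ACS consists of a single branch $\gamma(\omega)$ given by \eqref{eq:TwoVar_gamma1}. Membership in class I then amounts to two requirements on $\gamma$: it must have exactly two simple real zeros $\pm\omega_H$, and it must satisfy $\gamma > 0$ on $(-\omega_H,\omega_H)$ while $\gamma < 0$ elsewhere. I will show that the single inequality $|\det A| < |C|$ encodes both requirements simultaneously.

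First I would rewrite $\gamma(\omega_H) = 0$ as $|Y(\omega_H)|^2 = 1$ and clear denominators to recover \eqref{eq:omega12}, which is a monic quadratic in $u := \omega_H^2$ whose constant term is $(\det A)^2 - C^2$. Having exactly one pair $\pm\omega_H$ of simple real zeros of $\gamma$ (and no others) is equivalent to this quadratic possessing exactly one simple positive $u$-root, which by Vieta's formulas happens iff the product of its roots is strictly negative, i.e.\ $(\det A)^2 - C^2 < 0$. A short case check over the alternatives (two positive $u$-roots, both negative, complex, or a zero root) confirms that each of them violates either the "exactly two zeros" clause or the simplicity clause of class I.

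Next I would address the sign pattern. A direct evaluation gives $\gamma(0) = \ln(|C|/|\det A|)$, which is positive precisely when $|\det A| < |C|$. As $|\omega| \to \infty$, the leading $\omega^2$ in the numerator of \eqref{eq:Twovar_Y1} forces $|Y(\omega)| \to \infty$ regardless of whether $\Tr B$ vanishes, hence $\gamma(\pm\infty) = -\infty$. Since $\pm\omega_H$ are then the only real zeros and $\gamma$ is continuous, the required sign pattern follows automatically, completing the "if" direction. For the "only if" direction, class I demands $\gamma(0) > 0$ because $0$ lies in the interior positivity interval, which immediately yields $|\det A| < |C|$.

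The main subtlety I anticipate is the degenerate corner $\det A = 0$, where the formula for $\gamma(0)$ diverges to $+\infty$; this case still obeys $|\det A| < |C|$ and the argument goes through with $\gamma$ remaining well-defined on $\mathbb{R}\setminus\{0\}$ and tending to $+\infty$ at the origin. A minor check also ensures that the excluded-degeneracy condition of Definition~\ref{def:1} is vacuous for real $\omega$ under the standing assumption that $C$, $\Tr B$, and $\det B$ do not vanish simultaneously, so the single ACS branch $\gamma(\omega)$ is indeed defined on all of $\mathbb{R}$ apart from at most one removable point.
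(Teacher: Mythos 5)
Your proposal is correct and follows essentially the same route as the paper: reduce $\gamma(\omega_H)=0$ to the quadratic \eqref{eq:omega12} in $\omega_H^2$ and observe that class~I membership forces exactly one simple positive root of that quadratic, which by Vieta's formulas is precisely the sign condition $(\det A)^2 - C^2 < 0$. You are in fact more explicit than the paper's one-line justification, since you also verify the required sign pattern of $\gamma$ via $\gamma(0) = \ln\left(|C|/|\det A|\right)$ and $\gamma(\pm\infty) = -\infty$, a step the paper leaves implicit.
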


\noindent 
\textbf{Case 2 :} If $\det(B)\neq 0$, the roots of the generating polynomial \eqref{eq:YTwo} are
	\begin{align}
		Y_{1,2}(\omega)=\frac{1}{2\det(B)}\left(i\omega \Tr(B)-C\pm
  \sqrt{z(\omega)}\right),\label{eq:TwoVar_Y2}
	\end{align}
	where  
	\begin{align}
        & z(\omega)=\mu \omega^2+\eta+i\omega \kappa, \label{eq:z}\\
		&\mu=4\det(B)-(\Tr(B))^2,\label{eq:mu} \\ 
        &\eta=C^2-4\det(A)\det(B), \label{eq:eta}\\ 
        &\kappa=4\Tr(A)\det(B)-2\Tr(B)C. \label{eq:kappa}
	\end{align}
 With the notations 
 	\begin{align*}
		K_\pm(\omega)=-C\pm \Re\left(\sqrt{z(\omega)}\right), \quad 
		H_\pm (\omega)=\omega \Tr(B)\pm \Im\left(\sqrt{z(\omega)}\right),
	\end{align*}
the roots can be rewritten as 
	\begin{align}
		Y_{1,2}(\omega)=\frac{1}{2\det(B)}\left[K_\pm(\omega)+iH_\pm(\omega)\right],\label{eq:TwoVar_Y2_2}
	\end{align}
and the ACS has the form	
\begin{equation}
		\label{eq:TwoVar_gamma2}
		\gamma_{1,2}(\omega)=-\frac{1}{2}\ln\left[ \frac{K^2_\pm (\omega)+H^2_\pm (\omega)}{4(\det(B))^2}\right],
\end{equation}

			The value of $\omega_H$  is found from the condition $\gamma_{1}(\omega_{H})=0$ or $\gamma_{2}(\omega_{H})=0$, which is equivalent to  
	\begin{align}
		\label{eq:Two_Var_omegaH2}
		\left[ 
		\frac{K^2_+(\omega_{H})+H^2_+(\omega_{H})}{4(\det(B))^2} - 1
		\right]
		\left[ 
		\frac{K^2_-(\omega_{H})+H^2_-(\omega_{H})}{4(\det(B))^2} - 1
		\right]
		= 0.
	\end{align}
 The following Lemma holds.
	\begin{lem}[Condition for two-variable DDE with $\det (B) \ne 0$ to be class I]
		\label{lem:3.4}
		The DDE \eqref{eq:TwovarDDE} with $\det (B) \ne 0$ is of class I, if and only if equation \eqref{eq:Two_Var_omegaH2} possesses only one pair of simple roots $\omega_H$ and $-\omega_H$. In this case, theorem \ref{thm:1} holds with 
  	\begin{align}
  	    \phi_{H}= -\arg Y_1(\omega_H) =
	\arg\left[\frac{K_{+}(\omega_H)-iH_{+}(\omega_{H})}{2\det(B)}\right]\label{eq:UCI_phi1}
  	\end{align}
	if the simple root $\omega_H$ is the root of the first term in (\ref{eq:Two_Var_omegaH2}), and 
 \begin{align}
     \phi_{H}= -\arg Y_2(\omega_H) =
	\arg\left[\frac{K_{-}(\omega_H)-iH_{-}(\omega_{H})}{2\det(B)}\right] \label{eq:UCI_phi2}
 \end{align}
 otherwise. 
	\end{lem}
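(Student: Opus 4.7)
The strategy is to translate the ACS characterization of class~I (Definition~\ref{as:uc1}) into algebraic conditions on the two roots $Y_{1,2}(\omega)$ of the generating polynomial \eqref{eq:YTwo}, and then to read off $\phi_H$ from \eqref{eq:generalphiH}.

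First I would observe that $\gamma_{1,2}(\omega)=0$ is equivalent to $|Y_{1,2}(\omega)|=1$, and by \eqref{eq:TwoVar_Y2_2}--\eqref{eq:TwoVar_gamma2} each factor in \eqref{eq:Two_Var_omegaH2} is exactly $|Y_{1,2}(\omega)|^{2}-1$ up to the positive constant $1/(4(\det B)^{2})$. Hence the zeros of \eqref{eq:Two_Var_omegaH2} are precisely the frequencies at which some branch of the ACS touches the imaginary axis, and a simple root of the product corresponds to one factor vanishing simply while the other is nonzero there. In ACS language, exactly one branch crosses $\gamma=0$ transversally at $\omega_H$, and the other stays strictly away from zero at that point.

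Next I would establish the sign pattern required by Definition~\ref{as:uc1}. By Vieta applied to \eqref{eq:YTwo}, the product of roots equals $(\det(A)-i\omega\Tr(A)-\omega^{2})/\det(B)\sim -\omega^{2}/\det(B)$, so $|Y_{1}Y_{2}|\to\infty$ as $|\omega|\to\infty$, which together with the Vieta sum forces both $|Y_{j}(\omega)|\to\infty$, and hence $\gamma_{j}(\omega)\to-\infty$. Reality of $A,B$ gives $\overline{p_\omega(Y)}=p_{-\omega}(\overline{Y})$, so the unordered set $\{\gamma_{1}(\omega),\gamma_{2}(\omega)\}$ is even in $\omega$ and its zeros come in pairs $\pm\omega_H$. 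If \eqref{eq:Two_Var_omegaH2} has exactly one such simple pair, continuity then forces one branch to be positive on $(-\omega_H,\omega_H)$ and negative for $|\omega|>\omega_H$, while the other branch is strictly negative on all of $\mathbb{R}$, which is precisely Definition~\ref{as:uc1}. The converse is immediate: the class~I structure provides exactly one symmetric pair of transversal crossings of one branch, giving one simple pair of roots of the product.

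Finally, $\phi_H$ follows from \eqref{eq:generalphiH}: $\phi_H=-\arg Y_j(\omega_H)$, where $j$ indexes the vanishing branch in \eqref{eq:Two_Var_omegaH2}. Substituting the explicit form \eqref{eq:TwoVar_Y2_2} and using $-\arg z=\arg\overline{z}$ (valid because $|Y_j(\omega_H)|=1$) yields \eqref{eq:UCI_phi1} when the first factor vanishes and \eqref{eq:UCI_phi2} when the second does; the bifurcation conclusion is then read off from Corollary~\ref{thm:1}. The main technical subtlety I would handle most carefully is the branch-cut ambiguity of $\sqrt{z(\omega)}$: the individual labels $Y_1,Y_2$ are only locally well-defined, so the statement of the lemma is naturally phrased via the symmetric product \eqref{eq:Two_Var_omegaH2}, which is invariant under the branch swap. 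Transversality of a zero of this product descends to transversality of one of $\gamma_{1,2}$ via the implicit function theorem applied to $p_\omega(Y)=0$ at a simple root in $Y$, so the argument survives intact even where $Y_1$ and $Y_2$ cannot be tracked consistently across branch points.
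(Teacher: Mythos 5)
Your proof is correct and follows the only natural route, which is also the one the paper implicitly takes: the paper states Lemma~\ref{lem:3.4} without proof, treating it as an immediate consequence of the derivation of \eqref{eq:Two_Var_omegaH2} (each factor being $|Y_{1,2}(\omega)|^{2}-1$) together with Definition~\ref{as:uc1} and Eq.~\eqref{eq:generalphiH}. Your write-up simply supplies the omitted details --- the asymptotics $|Y_{j}(\omega)|\to\infty$ via Vieta, the $\omega\mapsto-\omega$ symmetry, the sign-pattern argument, and the branch-cut caveat for $\sqrt{z(\omega)}$ --- all of which are sound.
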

 Although Lemma~\ref{lem:3.4} provides an exact criterium, it is not explicitly stated in terms of the coefficients of the DDEs. 
 The following lemma gives simpler \textit{necessary} conditions.
\begin{lem}[Necessary conditions for two-variable DDE to be class I]
		\label{lem:2}
		If DDE \eqref{eq:DDE} with the coefficient matrices \eqref{eq:TwovarDDE} belongs to the universality class I, then the following conditions are satisfied
  \begin{align}
     \det(B)\neq 0,\quad |C| > |\det(A)+\det(B)|,\label{eq:Cnes}
  \end{align} 
where $C$ is defined in \eqref{eq:C}. 
	\end{lem}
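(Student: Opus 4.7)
The plan is to evaluate the Class~I ACS condition at the distinguished frequency $\omega=0$, where the generating polynomial \eqref{eq:YTwo} collapses to the real-coefficient quadratic
\begin{equation*}
q(Y):=p_0(Y)=\det(B)\,Y^2+C\,Y+\det(A),
\end{equation*}
and to translate the resulting root-location requirement into inequalities on $\det(A)$, $\det(B)$, $C$.

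First I would use Definition~\ref{as:uc1} together with the $|\omega|\to\infty$ behaviour of the ACS to show that exactly one root of $q(Y)$ lies strictly inside the unit circle and the other strictly outside. Assuming $\det(B)\ne 0$, the explicit formulas \eqref{eq:TwoVar_Y2}--\eqref{eq:kappa} give $|Y_{1,2}(\omega)|\to\infty$, hence $\gamma_{1,2}(\omega)\to -\infty$ as $|\omega|\to\infty$. Combined with the Class~I requirement $\gamma_k(\omega)\ne 0$ for all $\omega\in\mathbb R$ and $k\ne j$, continuity of $\gamma_k$ forces $\gamma_k(\omega)<0$ everywhere, so $|Y_k(0)|>1$; on the other hand $\gamma_j(0)>0$ by Definition~\ref{as:uc1}, so $|Y_j(0)|<1$.

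I would then extract the inequality $|C|>|\det(A)+\det(B)|$ by a standard root-location argument. Because $q(Y)$ has real coefficients, any non-real root is accompanied by its complex conjugate of equal modulus, which is incompatible with having one root inside and one outside the unit circle; hence $Y_1(0),Y_2(0)\in\mathbb R$. A short computation then gives
\begin{equation*}
q(1)\,q(-1)=\bigl(\det(B)\bigr)^{2}\bigl(Y_1(0)^{2}-1\bigr)\bigl(Y_2(0)^{2}-1\bigr)=\bigl(\det(A)+\det(B)\bigr)^{2}-C^{2},
\end{equation*}
and the sign condition $|Y_1(0)|<1<|Y_2(0)|$ makes the left-hand side strictly negative, yielding $|C|>|\det(A)+\det(B)|$ directly.

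For the remaining condition $\det(B)\ne 0$, I would argue that if $\det(B)=0$ then the generating polynomial \eqref{eq:YTwo} is only linear in $Y$, so the ACS has a single branch and the two-branch Class~I picture of Fig.~\ref{fig:classI} (with a genuinely non-crossing companion branch) cannot be realised; this degenerate situation is covered separately by Lemma~\ref{lem:3-1}. I expect this last step to be the main obstacle, because it requires pinning down precisely the sense in which the non-crossing companion branch is implicit in Definition~\ref{as:uc1} for a genuinely $2\times 2$ DDE, so that the necessity of $\det(B)\ne 0$ is consistent with the statement of Lemma~\ref{lem:3-1}. By contrast, the inequality $|C|>|\det(A)+\det(B)|$ follows routinely from the Vieta/Schur-type argument above and is the straightforward part of the proof.
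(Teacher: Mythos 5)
Your derivation of the inequality $|C|>|\det(A)+\det(B)|$ is correct and follows the same strategy as the paper: evaluate the ACS at $\omega=0$ and use that the distinguished branch must satisfy $\gamma_j(0)>0$ (so $|Y_j(0)|<1$) while the companion branch, being nonvanishing and tending to $-\infty$ as $|\omega|\to\infty$, must satisfy $|Y_k(0)|>1$. The paper reaches the same conclusion by writing the roots explicitly as $\bigl(|C|\pm\sqrt{\eta}\bigr)/(2|\det(B)|)$ with $\eta=C^2-4\det(A)\det(B)$ and manipulating the chain $\bigl(|C|-\sqrt{\eta}\bigr)^2<4(\det(B))^2<\bigl(|C|+\sqrt{\eta}\bigr)^2$; your route via $q(1)q(-1)=(\det(A)+\det(B))^2-C^2=(\det(B))^2\bigl(Y_1(0)^2-1\bigr)\bigl(Y_2(0)^2-1\bigr)<0$ is a cleaner way to extract the same inequality, and your observation that the roots at $\omega=0$ must be real is exactly the paper's preliminary step $\eta>0$ (which it deduces from $\gamma_1(0)\ne\gamma_2(0)$).

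Your unease about the condition $\det(B)\ne 0$ is justified, and you should not expect to close that gap: it is not a defect of your argument but of the statement. Definition \ref{as:uc1} does not demand a second, non-crossing branch — when the generating polynomial is linear the quantifier over $k\ne j$ is vacuous — and indeed Lemma \ref{lem:3-1} together with the example \eqref{eq:AB} exhibits a class~I DDE with $\det(B)=0$. The paper's own proof never establishes $\det(B)\ne 0$ either; it simply works with the Case~2 formulas \eqref{eq:TwoVar_Y2}--\eqref{eq:twovaromega0}, so $\det(B)\ne 0$ functions there as a standing hypothesis rather than a conclusion. The honest reading of Lemma \ref{lem:2} is therefore ``for a two-variable class~I DDE with $\det(B)\ne 0$, necessarily $|C|>|\det(A)+\det(B)|$,'' and on that reading your proof is complete; attempting to derive $\det(B)\ne 0$ from Definition \ref{as:uc1}, as your last paragraph tries to do, would actually fail.
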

	\begin{proof}
 We first note that $\eta>0$ (see Eq.~\eqref{eq:eta}) under the assumptions of the Lemma.
 Indeed, if otherwise, then $z(0)<0$  and $Y_1(0)=Y_2^{*}(0)$ and thus $\gamma_1(0)=\gamma_2(0)$, which contradicts the class I DDE definition.  
 Further, we have
  \begin{equation}
  \label{eq:twovaromega0}
  \gamma_{1,2}(0) = 
  -\frac{1}{2}\ln\left|\frac{1}{4(\det(B))^2}\left(|C| \pm \sqrt{\eta}\right)^2\right|.
  \end{equation}
  For the universality class I, it is necessary that 
\begin{align}
-\frac{1}{2}\ln\left|\frac{1}{4(\det(B))^2}\left(|C| - \sqrt{\eta}\right)^2\right|
> 0 >
-\frac{1}{2}\ln\left|\frac{1}{4(\det(B))^2}\left(|C| + \sqrt{\eta}\right)^2\right|
\end{align}
which is equivalent to 
\begin{align}
\left(|C| - \sqrt{\eta}\right)^2
< 4(\det(B))^2 <
\left(|C| + \sqrt{\eta}\right)^2.
\end{align}
The latter inequality leads to 
  $|C| > |\det(A)+\det(B)|$. Thereby the proof is complete.
	\end{proof}

\subsection{Example for the case \texorpdfstring{$\det(B)=0$}.}  \label{subsec3-3}

To illustrate the two-variable DDEs of universality class I, we consider the following parameter values
\begin{align}
		A=\left(\begin{array}{cc}
			-0.6 & 0.2\\
			0.2& -2
		\end{array}\right),\quad B=\left(\begin{array}{ll}
			1 & -1\\
			-1 & 1
		\end{array}\right).
  \label{eq:AB}
	\end{align}
 As a result, we obtain $\omega_H \approx 1.2893$ from \eqref{eq:TwoVar_omega} and 
$\phi_{H} \approx -0.856$ from \eqref{eq:TwoVar_phi}.
 Then, according to Corollary~\ref{thm:1}, the characteristic equation  has transverse destabilizing crossings with a pair of purely imaginary roots $\pm i\omega_H$ for the following time-delays: 
	\begin{align*}
		\tau_{k}=-0.664+4.871k \quad k=0,1,2,\cdots.
	\end{align*}
 Figure \ref{Fig:TwoVarC0} shows the spectrum for  $\tau_6 \approx 28.6$. The number of unstable eigenvalues for $\tau\in(\tau_{k-1},\tau_k)$ is $2k-1$ for $k=2,3,\cdots$.
 \begin{figure}[H]
		\centering		\includegraphics[width=7.5cm]{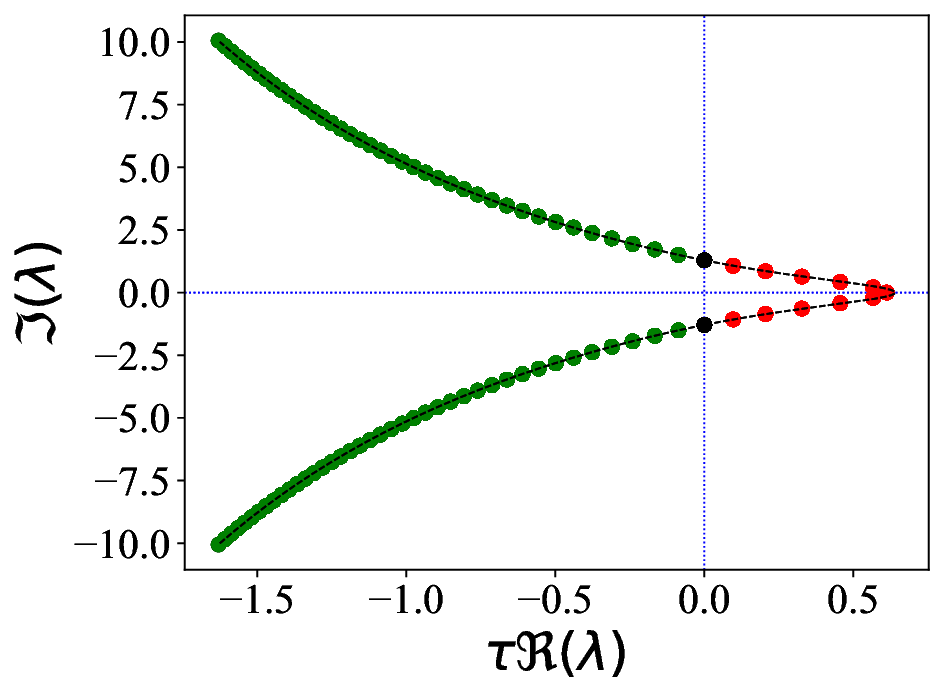}
		\caption{The spectrum of the two-variable delay system with the parameters \eqref{eq:AB} and the delay $\tau_6 \approx 28.6$. The dashed line denotes the curve of the asymptotic continuous spectrum.}
		\label{Fig:TwoVarC0}
	\end{figure}

\subsection{Example for the case \texorpdfstring{$\det(B)\neq 0$}
{}} \label{subsec3-4}

It is instructive to consider two cases: (a)  negative $C$ (see Eq.~\eqref{eq:C}), and (b) positive $C$, both satisfying the necessary condition \eqref{eq:Cnes}. Each of these cases will correspond to the qualitatively different behavior of ACS at $\omega=0$. 
	\begin{align}  
		(a):~
		A=\left(\begin{array}{cc}
			1 & -2\\
			4& -3
		\end{array}\right),\quad B=\left(\begin{array}{cc}
			-3 & 4\\
			-2 & 1.55
		\end{array}\right).\label{eq:ABa}
	\end{align}
 It is straightforward to check that \eqref{eq:Cnes} holds and $C<0$.
 We also obtain $\omega_H\approx 4.088$, $\phi_H\approx -2.977$, leading to the following values of the critical time-delays
\[
\tau_k \approx - 0.733+  1.536 k,\quad k=0,1,2,\cdots.
 \] 
Figure~\ref{Fig:TwoVarC1}(a) shows the corresponding roots of the characteristic equation and the ACS. 
	\begin{align}
		(b):~
		A=\left(\begin{array}{cc}
			2 & 1\\
			3& 1
		\end{array}\right),\quad B=\left(\begin{array}{ll}
			2 & -1\\
			1 & 1
		\end{array}\right).
    \label{eq:ABb}
	\end{align}
	Here we obtain similarly that $C>0$ and \eqref{eq:Cnes} holds, and 
   $\omega_H\approx 1.919$, $\phi_H \approx -1.286$, leading to
\[
\tau_k \approx - 0.670+  3.273 k,\quad k=0,1,2,\cdots.
 \] 
Figure \ref{Fig:TwoVarC1}(b) shows the roots of the characteristic equation. The asymptotic continuous spectra in (a) and (b) differ qualitatively for the unstable part, but this does not change the bifurcation scenarios and the universality class.
	\begin{figure}[H]
		\centering
\subfigure{\includegraphics[width=7.5cm]{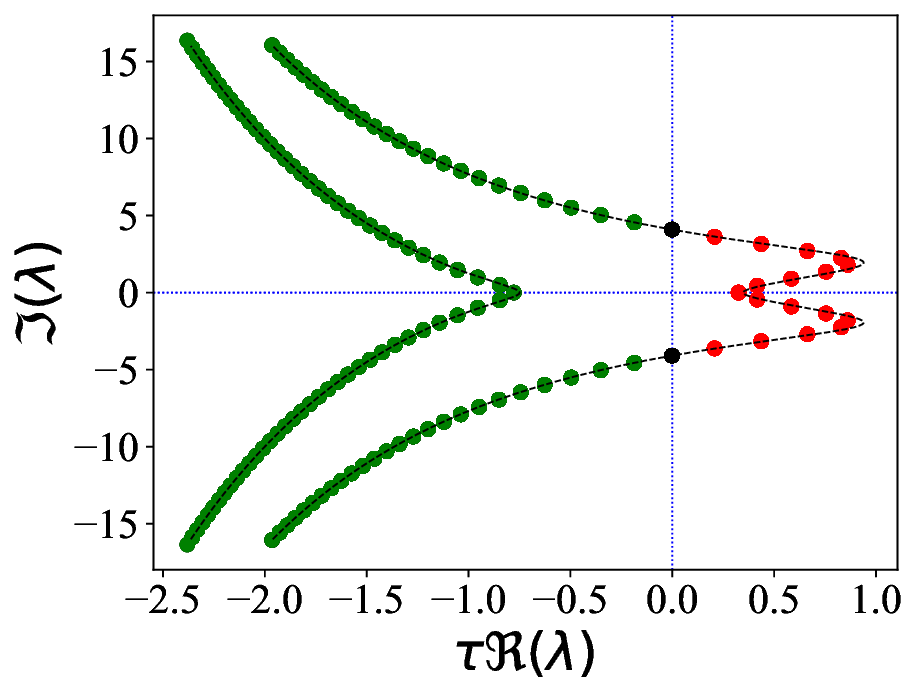}}
\subfigure{\includegraphics[width=7.5cm]{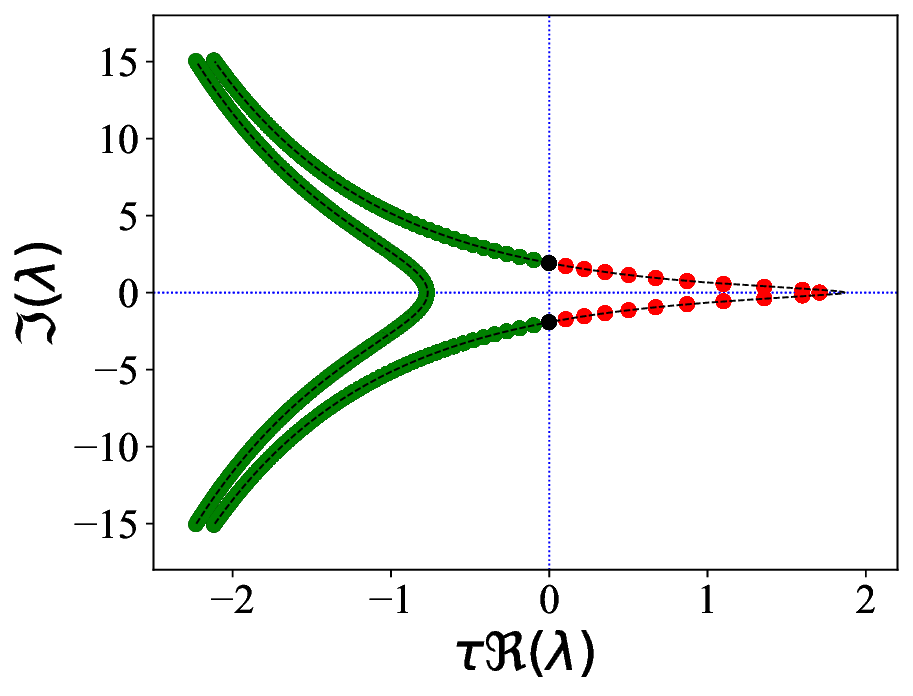}}
		\caption{
  Example of the spectrum for the two-variable delay system \eqref{eq:TwovarDDE} with $\det(B)\ne 0$.
  The coefficients are given in \eqref{eq:ABa} for (a) and \eqref{eq:ABb} for (b). Delay values are (a) $\tau=\tau_9=13.091$, (b) $\tau=\tau_{10}=32.06$. The spectrum in (a) corresponds to a negative value of $C$ (see Eq.~\eqref{eq:C}) and (b) to a positive value of $C$.}
		\label{Fig:TwoVarC1}
	\end{figure}

\section{Universality class II}\label{sec5} 

The universality class II, which appears commonly in applications, corresponds to the ACS shown in Fig.~\ref{fig:classII-1} (a) and (b). This spectrum has two unstable parts $(\omega_{H_2},\omega_{H_1})$ and $(-\omega_{H_1},-\omega_{H_2})$, i.e.
$\gamma_j(\omega_{H_m})=\gamma_h(-\omega_{H_m})=0$ for some $j,h\in\mathbb N$ and $m=1,2$.
Such type of spectrum leads to oscillatory instability and an Eckhaus phenomenon in systems with large time delays \cite{Wolfrum2006,DHuys2008,Yanchuk2015a,Yanchuk2015b,Yanchuk2017}.

	\begin{figure}[H]
		\centering
\subfigure[]{\includegraphics[width=4cm]{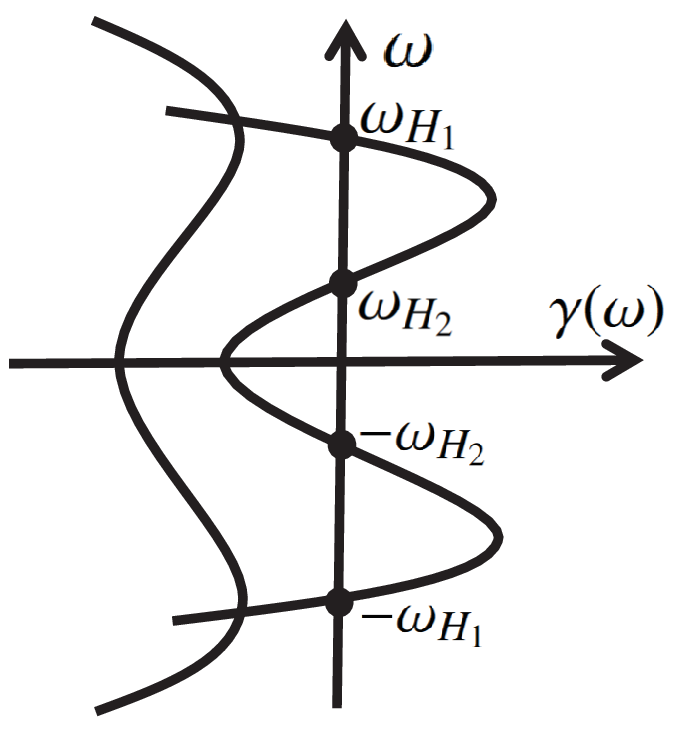}}
\ 
\subfigure[]{\includegraphics[width=4cm]{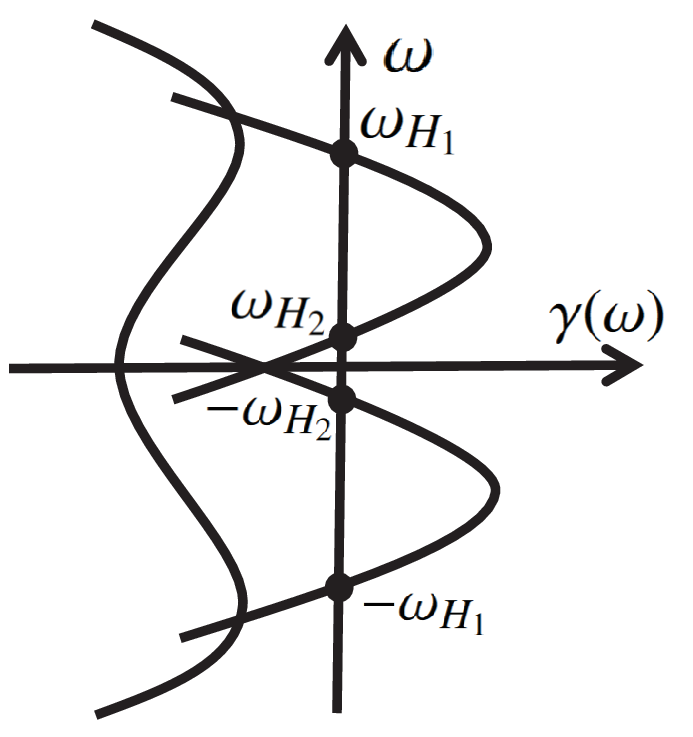}}
		\caption{Schematic structure of universality class II asymptotic spectrum.}
		\label{fig:classII-1}
	\end{figure}

\begin{definition}[Class II ACS] \label{def:4.1} 
		We define the \textit{ACS to be of universality class II} if there exist two positive real numbers $\omega_{H_1}>\omega_{H_2}>0$  such that the branches of the asymptotic continuous spectrum satisfy the following conditions:

  For some $j,h\in \mathbb N$, 
  
  (i) $\gamma_{j}\left(\omega_{H_m}\right)=0~(m=1,2)$,  $\gamma_{j}(\omega)>0$ for all $\omega \in (\omega_{H_2},\omega_{H_1})$.

  (ii) $\gamma_{h}\left(-\omega_{H_m}\right)=0~(m=1,2)$,  $\gamma_{h}(\omega)>0$ for all $\omega \in (-\omega_{H_1},-\omega_{H_2})$.

  (iii) $\omega_{H_m}$ and $-\omega_{H_m}$ are regular points of the functions $\gamma_{j}$ and $\gamma_{h}$, i.e. $\frac{d}{d\omega}\gamma_{h}\left(-\omega_{H_m}\right)$ and $\frac{d}{d\omega}\gamma_{j}\left(\omega_{H_m}\right)$ exist and are nonzero.

  (iv) For all remaining values of $\omega$ and the branches of the ACS not mentioned in (i)-(iii), the ACS is negative. 
	\end{definition}
 Two possible cases of class II spectrum are illustrated in Fig.~\ref{fig:classII-1}: Fig.~\ref{fig:classII-1}(a) for $j=h$ and Fig.~\ref{fig:classII-1}(b) for $j\ne h$.
	\begin{definition}[Class II DDEs] 
 We define the DDEs \eqref{eq:DDE}  to be of \textit{universality class II} if its ACS is of universality class II.
     \end{definition}
	
	We note that Definition \ref{def:4.1} implies that  $\left|Y_{j}(\omega_{H_m})\right| = 1$ and $\left|Y_{h}(-\omega_{H_m})\right| = 1$ $(m=1,2)$. We further define 
	\begin{align}
		\phi_{H_m}=-\arg\left[Y_{j}(\omega_{H_m})\right],\quad m=1,2.
		\label{eq:UC2_phiHm}
	\end{align}
 Theorem \ref{thm:Hopf} implies the following. 
	\begin{cor}[Bifurcation scenario in class II DDEs]
		\label{thm:4.1} Assume  the DDE \eqref{eq:DDE} is of universality class II. 
		Then system \eqref{eq:DDE} possesses delay-induced transverse crossings of the characteristic roots at $\lambda_c=\pm i\omega_{H_m}$ for the following values of time-delay 
		\begin{align}
			\tau_k^{(m)}=\frac{1}{\omega_{H_m}}\left(\phi_{H_m}+2\pi k\right),\quad k=0,1,2,\cdots,\label{eq:UC2_tau}
		\end{align}
   where $m=1,2$ and  $\phi_{H_m}$ are defined by Eq.~\eqref{eq:UC2_phiHm}.
		Moreover, there are no other delay-induced transverse crossings in the system. The crossings for  $m=1$ are destabilizing and for $m=2$ stabilizing, i.e.
       \begin{align*}
          \left.\frac{d}{d\tau}\Re[\lambda(\tau)]\right|_{\lambda=\pm i\omega_{H_1},\tau=\tau_{k}^{(1)}}>0,\quad \left.\frac{d}{d\tau}\Re[\lambda(\tau)]\right|_{\lambda=\pm i\omega_{H_2},\tau=\tau_{k}^{(2)}}<0.
      \end{align*}
	\end{cor}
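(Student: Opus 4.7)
The plan is to derive this corollary as a direct application of the Bifurcation Theorem \ref{thm:Hopf} at the two positive crossing frequencies $\omega_{H_1}, \omega_{H_2}$ singled out in the class II Definition \ref{def:4.1}, plus their negatives. First I would verify that these are the \emph{only} candidate crossing frequencies: by item (iv) of Definition \ref{def:4.1}, every branch of the ACS satisfies $\gamma_k(\omega)\ne 0$ except at the four points $\pm\omega_{H_1},\pm\omega_{H_2}$ (occurring on branches $\gamma_j$ for positive frequencies and $\gamma_h$ for negative ones). By Theorem \ref{thm:Hopf}, any delay-induced transverse crossing of a characteristic root must arise from such a zero of an ACS branch, so no other crossings are possible, which also establishes the last sentence of the statement.

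Next I would apply Theorem \ref{thm:Hopf} separately at $\omega_{H_1}$ and at $\omega_{H_2}$. Since $\gamma_j(\omega_{H_m})=0$ and $|Y_j(\omega_{H_m})|=1$ by \eqref{eq:ACS}, the corresponding values of $\tau$ at which $\lambda = i\omega_{H_m}$ is a characteristic root are exactly those given by \eqref{eq:generalHopf} with phase \eqref{eq:generalphiH}, which is precisely the formula \eqref{eq:UC2_tau} with $\phi_{H_m}$ as defined in \eqref{eq:UC2_phiHm}. Symmetry under complex conjugation (the DDE has real coefficients) yields the conjugate crossings at $-i\omega_{H_m}$ via the branch $\gamma_h$ and the same delays $\tau_k^{(m)}$, so the statement $\lambda_c=\pm i\omega_{H_m}$ is covered by a single formula.

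The decisive step is reading off the sign of $\frac{d}{d\omega}\gamma_j$ at the two zeros from the shape of the spectrum. Since $\gamma_j(\omega)>0$ strictly inside $(\omega_{H_2},\omega_{H_1})$ and $\gamma_j(\omega)<0$ on a punctured neighbourhood outside by item (iv), the function $\gamma_j$ changes sign from negative to positive at $\omega_{H_2}$ and from positive to negative at $\omega_{H_1}$. Combined with item (iii) (existence and non-vanishing of the derivative), this forces $\frac{d}{d\omega}\gamma_j(\omega_{H_1})<0$ and $\frac{d}{d\omega}\gamma_j(\omega_{H_2})>0$. Theorem \ref{thm:Trans} then immediately classifies the $m=1$ crossings as destabilizing and the $m=2$ crossings as stabilizing, giving the signs of $\frac{d}{d\tau}\Re[\lambda(\tau)]$ claimed at the end.

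There is no real obstacle: the corollary is essentially the specialization of the general Bifurcation Theorem to the geometric configuration of a class II ACS. The only point that requires any care is the bookkeeping between the two branches $\gamma_j$ (positive $\omega$) and $\gamma_h$ (negative $\omega$), and checking that the formula \eqref{eq:UC2_phiHm}, which is written using $Y_j$ at the positive frequency, correctly parameterises the crossings at both $+i\omega_{H_m}$ and $-i\omega_{H_m}$; this is settled by the symmetry $Y_h(-\omega)=\overline{Y_j(\omega)}$ inherited from real coefficients, so the set of delays \eqref{eq:UC2_tau} is the same for both signs.
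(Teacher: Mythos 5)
Your proposal is correct and follows essentially the same route as the paper, which presents this corollary as a direct consequence of Theorem \ref{thm:Hopf} (the paper gives no separate proof beyond the remark that the theorem implies it). Your sign analysis of $\frac{d}{d\omega}\gamma_j$ at $\omega_{H_1}$ and $\omega_{H_2}$ from the shape of the class II spectrum, and the use of complex-conjugate symmetry to cover the crossings at $-i\omega_{H_m}$, supply exactly the details the paper leaves implicit.
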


In contrast to class I systems, class II systems can have two pairs of critical characteristic roots, which may lead to double-Hopf bifurcations in nonlinear DDEs. 
The condition for this is $\tau_k^{(1)}=\tau_l^{(2)}$ with some $k,l\in\mathbb N$.
The following theorem provides a countable number of conditions for the appearance of two pairs of critical roots. 

\begin{prop}[
\textbf{Two pairs of critical characteristic roots in class II DDEs}
]
\label{thm:double-classII}
    Assume that DDE \eqref{eq:DDE} is of universality class II. 
	Then the characteristic equation \eqref{eq:CharEq} possesses two pairs of purely imaginary roots $\pm i\omega_{H_m},~m=1,2$ if and only if 
 \begin{equation}
     	\frac{1}{\omega_{H_1}}\left(\phi_{H_1}+2\pi k\right)
      = 
      \frac{1}{\omega_{H_2}}\left(\phi_{H_2}+2\pi l\right),
      \quad k,l \in \mathbb N.
 \end{equation}
\end{prop}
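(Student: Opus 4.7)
The plan is to deduce this proposition as an almost immediate consequence of Corollary~\ref{thm:4.1}, which already tells us exactly which delays $\tau$ produce each individual critical pair. The statement is essentially a simultaneity condition: both pairs of critical roots are present for the same $\tau$ if and only if both of the characterizing delay sequences coincide at that $\tau$.

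First I would argue the forward direction. Suppose $\pm i\omega_{H_1}$ and $\pm i\omega_{H_2}$ are both characteristic roots of \eqref{eq:CharEq} for some common $\tau$. Since the DDE is of class~II, by Corollary~\ref{thm:4.1} (which itself applies the Bifurcation Theorem~\ref{thm:Hopf} to the two branches of the ACS that cross the imaginary axis), the fact that $\pm i\omega_{H_1}$ is a characteristic root is equivalent to $\tau=\tau_k^{(1)}=\frac{1}{\omega_{H_1}}(\phi_{H_1}+2\pi k)$ for some $k\in\mathbb{N}$, and similarly $\pm i\omega_{H_2}$ being a characteristic root forces $\tau=\tau_l^{(2)}=\frac{1}{\omega_{H_2}}(\phi_{H_2}+2\pi l)$ for some $l\in\mathbb{N}$. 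Equating these two expressions for $\tau$ yields the claimed identity. The reverse direction is then immediate: if the identity holds for some $k,l$, then the common value $\tau^\star$ equals both $\tau_k^{(1)}$ and $\tau_l^{(2)}$, so by Corollary~\ref{thm:4.1} both pairs are characteristic roots at $\tau^\star$.

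To make the use of Corollary~\ref{thm:4.1} rigorous, I would explicitly unwind its reasoning for clarity: the condition $\gamma_j(\omega_{H_1})=0$ in Definition~\ref{def:4.1} gives $|Y_j(\omega_{H_1})|=1$, so $Y_j(\omega_{H_1})=e^{-i\phi_{H_1}}$ with $\phi_{H_1}=-\arg Y_j(\omega_{H_1})$. Then $i\omega_{H_1}$ solves the characteristic equation \eqref{eq:CharEq} precisely when $e^{-i\omega_{H_1}\tau}=Y_j(\omega_{H_1})=e^{-i\phi_{H_1}}$, which holds iff $\omega_{H_1}\tau\equiv\phi_{H_1}\pmod{2\pi}$, i.e.\ $\tau=\tau_k^{(1)}$ for some $k$. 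The identical argument with $\gamma_h$ and $\omega_{H_2}$ yields the characterization $\tau=\tau_l^{(2)}$. Intersecting the two discrete sets of admissible delays gives the diophantine-like identity in the statement.

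I do not expect a genuine obstacle: the class~II hypothesis already rules out any additional branches of the ACS crossing the imaginary axis (condition (iv) of Definition~\ref{def:4.1}), so no spurious critical characteristic roots arise that could complicate the ``only if'' direction, and the simplicity of $\omega_{H_m}$ as roots of $\gamma_j,\gamma_h$ guarantees that each $Y_{j,h}(\omega_{H_m})$ is a well-defined unimodular number so $\phi_{H_m}$ is unambiguous. The only point deserving a brief remark is that the integer indices $k,l$ should in principle range over $\mathbb{Z}$ if we want all positive real $\tau$; restricting to $\mathbb{N}$ (as in \eqref{eq:UC2_tau}) corresponds to the physically relevant positive delays, and I would mention this parallel to the convention of Corollary~\ref{thm:4.1} so there is no inconsistency with the statement.
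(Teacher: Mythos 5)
Your argument is correct and matches what the paper intends: the paper gives no separate proof of this proposition, treating it (as you do) as an immediate consequence of Corollary~\ref{thm:4.1} together with the observation that $i\omega_{H_m}$ is a root of \eqref{eq:CharEq} exactly when $e^{-i\omega_{H_m}\tau}$ coincides with the unique unimodular root $Y(\omega_{H_m})$ of the generating polynomial, which pins $\tau$ to the discrete sequence $\tau_k^{(m)}$. Your explicit unwinding of the equivalence and the remark on the index set are consistent with the paper's conventions, so nothing further is needed.
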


\subsection{First example for universality class II} \label{subsec4-1}

	We consider DDE \eqref{eq:DDE} with the following matrices
	\begin{align}\label{ExUC2_1}
		A=\left(\begin{array}{cc}
			-\alpha & \beta\\
			-\varphi& 0
		\end{array}\right),\quad B=\left(\begin{array}{ll}
			-\alpha & -\beta\\
			0 & -\varphi
		\end{array}\right),
	\end{align}
	where $\alpha=0.5$, $\beta=4.5$, and $\varphi=1.5$. 
 The ACS is given by Eq.~\eqref{eq:TwoVar_gamma2} with $\det (B)=0.75$, $\Tr(B)= -2$,  $C=-6$, $z(\omega) = -\omega^2 + 15.75 - i 25.5 \omega$. We also obtain $\phi_{H_1} = 2.481 $, $\phi_{H_2} =-0.492$, $\omega_{H_1} =  4.085 $, $\omega_{H_2} =0.977 $, and  the sequence of critical time-delays  are 
\begin{align*}
    \tau_k^{(1)} =  0.607+  1.537 k,\quad \tau_k^{(2)} = -0.5036+6.4278 k,\quad k=0,1,2,\cdots.
\end{align*}

Figure \ref{Fig:UC2_1} shows the roots of the characteristic equation and the ACS for $\tau=\tau_{12}^{(1)}=19.051$. 
  
	\begin{figure}[H]
		\centering		\subfigure{\includegraphics[width=7.5cm]{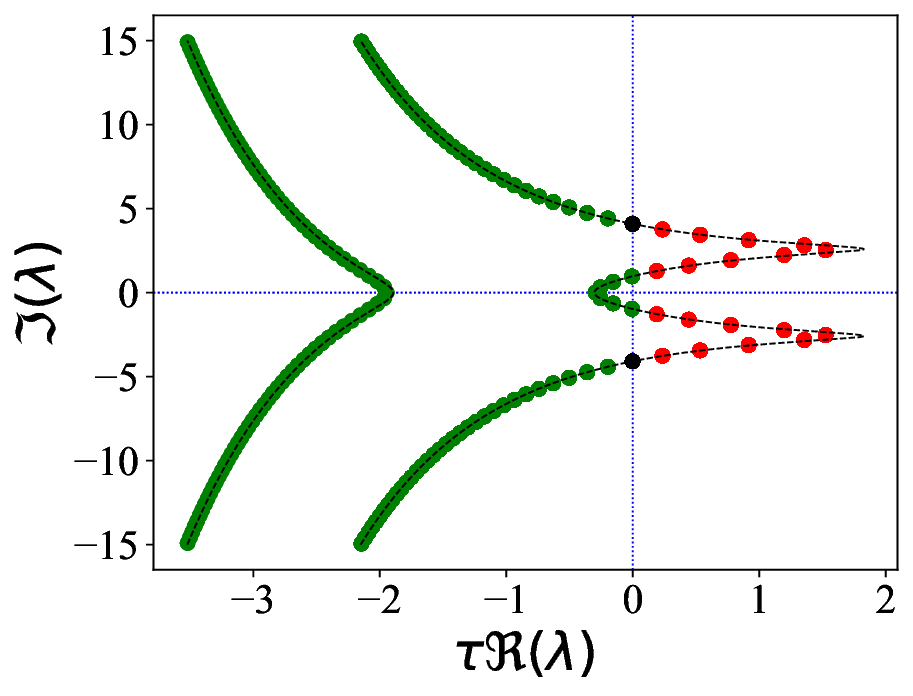}}
		\caption{Example of the spectrum for the two-variable delay system \eqref{ExUC2_1} with  $\tau= 19.051$ and $\alpha=0.5$, $\beta=4.5$, and $\varphi=1.5$.}
		\label{Fig:UC2_1}
	\end{figure}
 
\subsection{Second example for universality class II }\label{subsec4-2}
 
Here we present an example with the spectrum shown in Fig.~\ref{fig:classII-1}(b). 
For this we consider system \eqref{eq:DDE} with the parameters
	\begin{align}\label{ExUC2_2}
		A=\left(\begin{array}{cc}
			\alpha & \beta\\
			-\beta& \alpha
		\end{array}\right),\quad B=\left(\begin{array}{ll}
			\mu & 0\\
			0 & \mu
		\end{array}\right),
	\end{align}
	where $\alpha,~\beta$, and $\mu$ are  real parameters with $\beta>0$ and $\mu\neq 0$. 
 A similar system appears via a linearization in \cite{Wolfrum2006}. The corresponding 
  characteristic equation is 
	\begin{align}
		(\lambda-\alpha-\mu e^{-\lambda \tau})^2+\beta^2=
        \left( 
        \lambda-\alpha-\mu e^{-\lambda \tau} + i\beta 
        \right)
        \left( 
        \lambda-\alpha-\mu e^{-\lambda \tau} - i\beta 
        \right)
        = 0,
  \label{eq:CqUc2}  
	\end{align}
    and the generating polynomial 
	has two roots 
	\begin{align*}
		Y_{1,2}=-\frac{\alpha}{\mu}+i\left(\frac{\omega\pm \beta}{\mu}\right).
	\end{align*}
Hence the ACS is composed of two curves given by  
	\begin{align}
		\gamma_{1,2}(\omega)=-\frac{1}{2}
        \ln\left[
            \alpha^2 + (\omega\pm\beta)^2 
            \right] 
        + \ln |\mu|.
  \label{eq:ac-Uc2}
	\end{align}
 The following condition for the system to be of universality class II is straightforward.
\begin{prop}[Condition for system \eqref{eq:DDE}, \eqref{ExUC2_2} to be of class II]
		\label{lem:4.2}
 System \eqref{eq:DDE} with matrices \eqref{ExUC2_2} belongs to the universality class II if and only if the following conditions are satisfied
  \begin{align}
     |\alpha | < |\mu|,\quad \alpha^2+\beta^2>\mu^2.
  \end{align} 
	\end{prop}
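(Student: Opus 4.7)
The plan is to verify the four conditions of Definition~\ref{def:4.1} directly from the explicit formula \eqref{eq:ac-Uc2} for the ACS. The system has exactly two ACS branches,
\[
\gamma_\pm(\omega) = -\tfrac{1}{2}\ln\!\bigl[\alpha^2 + (\omega \pm \beta)^2\bigr] + \ln|\mu|,
\]
which obey the symmetry $\gamma_+(\omega) = \gamma_-(-\omega)$. Each branch attains a unique global maximum at $\omega = \mp\beta$ of value $\ln(|\mu|/|\alpha|)$ (or $+\infty$ when $\alpha = 0$), and is strictly decreasing away from that maximum.

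\emph{Necessity.} Assume the system is of class II. Then each branch must take strictly positive values on an open interval, which forces the maxima to be positive, i.e.\ $|\alpha|<|\mu|$. Setting $s := \sqrt{\mu^2 - \alpha^2}>0$, a direct computation gives that the zeros of $\gamma_-$ are $\beta \pm s$ and the zeros of $\gamma_+$ are $-\beta \pm s$, with $\gamma_->0$ exactly on $(\beta-s, \beta+s)$ and $\gamma_+>0$ exactly on $(-\beta-s, -\beta+s)$. Condition (i) of Definition~\ref{def:4.1} forces the positive-frequency unstable interval to lie strictly inside $(0,\infty)$, which gives $\beta - s > 0$, i.e.\ $\alpha^2+\beta^2 > \mu^2$.

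\emph{Sufficiency.} Conversely, given both inequalities, let $j$ index the $\gamma_-$ branch and $h$ the $\gamma_+$ branch, and set $\omega_{H_1} := \beta + s$, $\omega_{H_2} := \beta - s$; both are positive by $\beta > s$. Items (i)--(ii) of Definition~\ref{def:4.1} follow from the previous computation together with the symmetry between the two branches. For (iii), a short calculation yields $\gamma_-'(\beta \pm s) = \mp s/\mu^2 \ne 0$ (and analogously for $\gamma_+$). Item (iv) is automatic since the two branches exhaust the ACS and each is strictly negative outside its own unstable interval.

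The main bookkeeping obstacle is the edge case $\alpha = 0$, where $\gamma_\pm$ diverges at $\omega = \mp\beta$ and so is not a smooth finite-valued function there. I would handle this by noting that the hypothesis $\alpha^2+\beta^2>\mu^2$ reduces to $\beta>|\mu|$ in this case, so the singularity lies strictly inside the unstable interval, away from its endpoints; the positivity of $\gamma_\pm$ inside the interval is only reinforced, and the transversality statement at the endpoints $\beta \pm |\mu|$ is unaffected.
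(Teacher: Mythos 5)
Your proof is correct and follows essentially the same route as the paper: a direct analysis of the two explicit ACS branches $\gamma_\pm$, locating their maxima and zeros and matching the result against Definition~\ref{def:4.1}. You are in fact somewhat more complete than the paper's own argument, since you also verify the transversality condition (iii) via $\gamma_-'(\beta\pm s)=\mp s/\mu^2$ and address the degenerate case $\alpha=0$, neither of which the paper spells out.
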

	\begin{proof}
 Firstly, we have
  \begin{equation}
  \gamma_{1,2}(0) =-\frac{1}{2}\ln\left[\frac{\alpha^2}{\mu^2}+\frac{\beta^2}{\mu^2} \right].
  \end{equation}
  From \eqref{eq:ac-Uc2}, we obtain
	\begin{align}
		\omega_{H_1}=\beta+\sqrt{\mu^2-\alpha^2}, \quad \omega_{H_2}=\beta-\sqrt{\mu^2-\alpha^2}.\label{eq:omega_Hclass2}
	\end{align}
  Then, for the system to belong to the universality class II, it is necessary that $\gamma_{1}(0)=\gamma_{2}(0)=\gamma(0)<0$, which leads to  $\alpha^2+\beta^2>\mu^2$.
  Finally note that the condition $|\alpha|<|\mu|$ is necessary and sufficient for the existence of two real positive roots $\omega_{H_1}>\omega_{H_2}$.  
 \end{proof}
	
The phases $\phi_{H_m}$ are given as
	\begin{align}
 \label{eq:phi_Hclass2_1}
		\phi_{H_1}&=-\arg Y_1(\omega_{H_1})=\arg\left[-\frac{\alpha}{\mu}-i\frac{2\beta+\sqrt{\mu^2-\alpha^2}}{\mu} \right],\\ 
		\phi_{H_2}&=-\arg
		Y_1(\omega_{H_2})=\arg\left[-\frac{\alpha}{\mu}-i\frac{2\beta-\sqrt{\mu^2-\alpha^2}}{\mu} \right].\label{eq:phi_Hclass2_2}
	\end{align}
	Corollary~\ref{thm:4.1} implies that transverse crossings with  $\lambda_c = \pm i \omega_{H_m}$ occur for the following time-delays:
	\begin{align}\label{eq:tau1Exam}
		\tau^{(1)}_k&=\frac{1}{\omega_{H_1}}\left(\phi_{H_1}+2\pi k\right)=\frac{1}{\beta+\sqrt{\mu^2-\alpha^2}}\arg\left[-\frac{\alpha}{\mu}-i\frac{2\beta+\sqrt{\mu^2-\alpha^2}}{\mu} +2\pi k \right],\\ \label{eq:tau2Exam}
		\tau^{(2)}_{k}&=\frac{1}{\omega_{H_2}}\left(\phi_{H_2}+2\pi k\right)=\frac{1}{\beta-\sqrt{\mu^2-\alpha^2}}\arg\left[-\frac{\alpha}{\mu}-i\frac{2\beta-\sqrt{\mu^2-\alpha^2}}{\mu} +2\pi k \right], \\ &k=0,1,2,\cdots.\nonumber
	\end{align}
	
 Let us illustrate the spectrum for specific parameter values $\alpha=0.5$, $\beta=3.5$, $\mu=2.2$. 
 Using Eqs.~\eqref{eq:omega_Hclass2}--\eqref{eq:phi_Hclass2_2}, we have $\phi_{H_1} = -1.6254 $, $\phi_{H_2} =-1.6734$, $\omega_{H_1} =  5.6424 $, $\omega_{H_2} =1.3576 $ and the sequence of critical characteristic roots appear for 
\[
\tau_k^{(1)} = -0.288+  1.113 k,\quad
 \tau_k^{(2)} = 1.233+4.626k,\quad k=0,1,2,\cdots.
 \]
  Figure \ref{Fig:UC2_2} shows the roots of the corresponding characteristic equation and the ACS for $\tau=\tau_{5}^{(2)}=24.363$.
	\begin{figure}[H]
		\centering	\includegraphics[width=7.5cm]{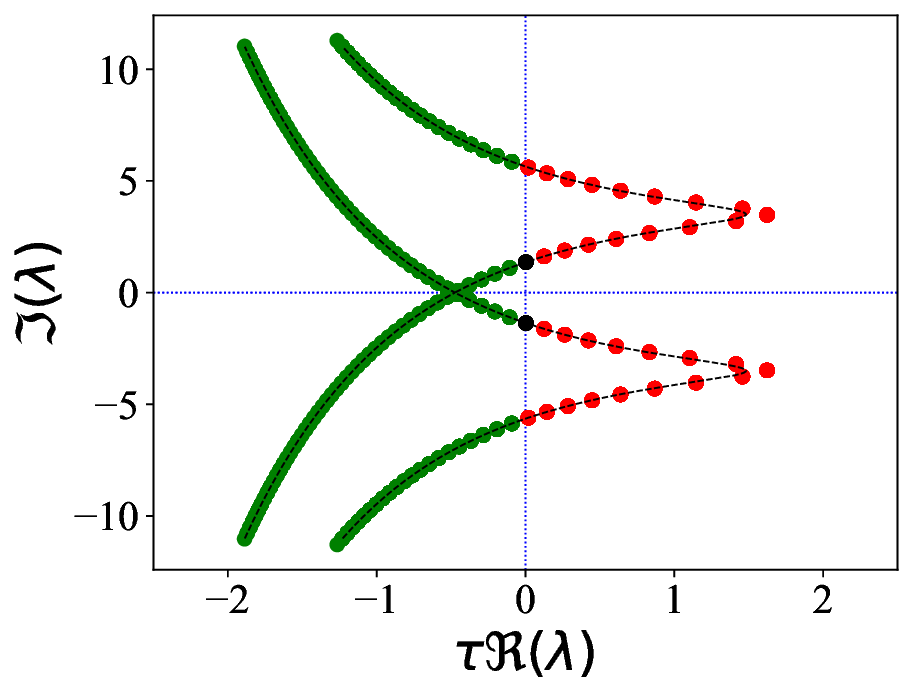}
		\caption{Spectrum of the two-variable delay system \eqref{eq:DDE}, \eqref{ExUC2_2} with  $\tau=\tau_{5}^{(2)}=24.363$,  $\alpha=0.5$, $\beta=3.5$, and $\mu=2.2$. }
		\label{Fig:UC2_2}
	\end{figure}

\section{Universality class III}\label{sec6} 

The universality class III, which appears commonly in applications, corresponds to the asymptotic spectrum shown in Fig.~\ref{fig:UC3} with two unstable parts $(-\omega_{H},0) \bigcup (0, \omega_{H})$, $\gamma_j(0)=0$, $\gamma_j(\pm \omega_H)=0$, $j\in \mathbb{N}$.
Such spectrum is common for systems with symmetries such as the Stuart-Landau system with time delay \cite{Wolfrum2006,Fukuda2007,Perlikowski2010a,Selivanov2012,Yanchuk2015a,Martin2016,Zhang2018} or the Lang-Kobayashi laser model \cite{Lang1980, Alsing1996, Heil1998, Mulet1999, Pieroux2003, Green2009,  Yanchuk2010a, Dahms2010,  Wolfrum2010, RoehmBoehmLuedge2016}.
 \begin{figure}[H]
		\centering 
\subfigure{\includegraphics[width=4cm]{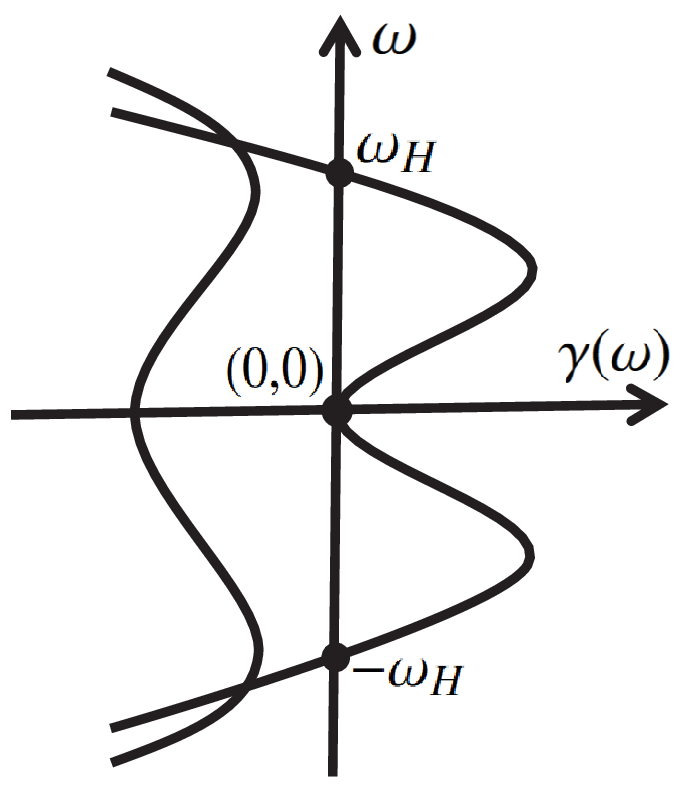}}
		\caption{Schematic structure of the class III asymptotic spectrum.}
		\label{fig:UC3}
	\end{figure}
\begin{definition}[Class III ACS] \label{as:uc3} 
    \textit{The ACS is of universality class III} if there exist $\omega_{H}>0$ and $j\in \mathbb{N}$ such that a branch of the ACS satisfies  $\gamma_{j}\left(0\right)=0$ and $\gamma_{j}\left(\pm\omega_{H}\right)=0$,  $\gamma_{j}(\omega)>0$ for all $\omega \in (-\omega_{H},0)\bigcup (0, \omega_H)$, and $\gamma_{j}(\omega)<0$ for all $\omega \not\in [-\omega_{H},\omega_H]$.
    Moreover, $\gamma_{k}\left(\omega\right)\ne 0$
    for all $k\ne j$ and $\omega\in\mathbb{R}$. 
\end{definition}

\begin{definition}[Class III DDE]
    DDEs \eqref{eq:DDE} is of \textit{universality class III} if its ACS is of universality class III. 
\end{definition}

As in the previous cases, Definition \ref{as:uc3} implies that  $\left|Y_{j}(\omega_{H})\right| = 1$ and we define 
\begin{align}
\phi_{H}=-\arg\left[Y_{j}(\omega_{H})\right].
\label{eq:UC3phi_H}
\end{align}

The following corollary of the Theorem~\ref{thm:Hopf} for the class III DDEs describes the sequences of transverse crossings (bifurcations) in such DDEs. 

\begin{cor}[Bifurcation scenario in class III DDEs]
\label{thm:5_1} Assume  that the DDE \eqref{eq:DDE} is of universality class III. 
Then the characteristic equation \eqref{eq:CharEq} has a characteristic root  $\lambda_0=0$ for all time-delays if and only if $\det(A+B)=0$.  Also, the system possesses destabilizing transverse crossings of the characteristic roots with  $\lambda_c=\pm i\omega_{H}$ for the following time-delays 
\begin{align}
\tau_{k}=\frac{1}{\omega_{H}}\left(\phi_{H}+2\pi k\right),\quad k=0,1,2,\cdots,\label{eq:UC3_tauHopf}
\end{align}
where $\phi_{H}$ is defined by Eq.~\eqref{eq:UC3phi_H}.
Moreover, there are no other delay-induced transverse crossings of the characteristic roots in \eqref{eq:DDE}. 
\end{cor}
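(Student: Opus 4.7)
The plan is to reduce the corollary to direct application of the Bifurcation Theorem \ref{thm:Hopf} and the Transversality Theorem \ref{thm:Trans}, using the geometric shape of a class III ACS.

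For the first claim about the root $\lambda_0=0$, I would simply substitute $\lambda=0$ into the characteristic equation \eqref{eq:CharEq}, which gives $\det[-A-B]=(-1)^n\det(A+B)$. Since this condition is entirely independent of $\tau$, the value $\lambda=0$ is a characteristic root for every time-delay if and only if $\det(A+B)=0$, and if it fails then $\lambda=0$ is never a root. Note that this zero root is non-delay-induced in the sense of the remark following Theorem \ref{thm:Hopf}, so it plays no role in the transverse-crossing count.

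For the transverse crossings at $\pm i\omega_H$, I would use Definition \ref{as:uc3}: on the relevant branch one has $\gamma_j(\omega_H)=0$ with $\gamma_j>0$ immediately to the left of $\omega_H$ and $\gamma_j<0$ immediately to the right. Smoothness of $\gamma_j$, inherited from the analyticity of the roots of the generating polynomial, combined with this strict sign change forces $\frac{d}{d\omega}\gamma_j(\omega_H)<0$. The Transversality Theorem \ref{thm:Trans} then yields a transverse, destabilizing crossing, and the Bifurcation Theorem \ref{thm:Hopf} supplies the sequence of critical delays $\tau_k=\frac{1}{\omega_H}(\phi_H+2\pi k)$ with $\phi_H=-\arg[Y_j(\omega_H)]$, which is exactly \eqref{eq:UC3_tauHopf}.

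For the uniqueness statement (no other delay-induced transverse crossings), Theorem \ref{thm:Hopf} tells me that any such crossing must correspond to a frequency $\omega^\ast$ at which some branch $\gamma_{j'}$ vanishes with nonzero derivative. By Definition \ref{as:uc3}, the only branch hitting zero is $\gamma_j$, whose zero set is $\{0,\pm\omega_H\}$. The crossings at $\pm\omega_H$ are already accounted for above. The remaining candidate $\omega=0$ is excluded because $\gamma_j>0$ on a punctured neighborhood of $0$, so $0$ is a local minimum of $\gamma_j$ and hence $\gamma_j'(0)=0$; the Transversality Theorem's nondegeneracy hypothesis fails and no transverse crossing occurs there.

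The main subtlety, which I expect to be the only real obstacle, is disentangling the two distinct roles played by $\omega=0$: it is simultaneously a point where the ACS touches the imaginary axis (so one must verify it does not yield a transverse crossing, via the vanishing of $\gamma_j'(0)$) and a point that may or may not correspond to an actual characteristic root of \eqref{eq:DDE} (controlled by the $\tau$-independent condition $\det(A+B)=0$). Making clear that the tangential contact at $\omega=0$ produces only a persistent, non-delay-induced eigenvalue—never a bifurcation—is the one place where the argument is not pure bookkeeping on the earlier theorems.
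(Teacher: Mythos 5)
Your proposal is correct and follows essentially the same route as the paper: the $\lambda_0=0$ condition is obtained by substituting $\lambda=0$ into the characteristic equation (where the $\tau$-dependence drops out), and everything else is read off from Theorem~\ref{thm:Hopf} together with the sign of $\gamma_j'$ at $\pm\omega_H$; your explicit exclusion of $\omega=0$ via $\gamma_j'(0)=0$ is a more careful spelling-out of what the paper leaves implicit. The only caveat, shared with the paper, is that a strict sign change of $\gamma_j$ at $\omega_H$ by itself only gives $\gamma_j'(\omega_H)\le 0$, so the nondegeneracy $\gamma_j'(\omega_H)\ne 0$ is really an implicit regularity assumption of the class~III definition rather than a consequence of it.
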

\begin{proof}
The root $\lambda_0=0$ is only possible if 
\begin{align}
\det\left[A+B\right]=0.\label{eq:YHopf3_1}
\end{align}
This is the case when $Y_{j}(0)=1$ or, equivalently,  $\varphi_H=0$. 
The remaining statement follows from Theorem~\ref{thm:Hopf}. 
\end{proof}

\subsection{Two-variable example of DDE of universality class III with \texorpdfstring{$\det (B) = 0$}. }
 
We remind that there exists only one curve of the ACS for $\det (B) = 0$ in the case of two-variable DDE. Using Eq.~\eqref{eq:Twovar_Y1} from section \ref{subsec3-2}, the single root of the generating polynomial  at $\omega=0$ is 
\begin{equation}
		Y(0) = -\frac{ \det(A)}
		{C},\label{eq:Y0}
	\end{equation}
 and the asymptotic continuous spectrum at $\omega=0$ is 
\begin{equation}
		\gamma(0) = 
		-  \ln
		\left| \frac{ \det(A)}
    		{C}
		\right|. \label{eq:gamma0}
	\end{equation}
 Then, the condition $\gamma(0)=0$ leads to $|C|=|\det(A)|$.  
 Further, the condition $\gamma(\omega_H)=0$, $\omega_H\ne 0$ is equivalent to
 \begin{align}
     \omega^2_H+(\Tr(A))^2-(\Tr(B))^2-2\det(A)=0
     \label{eq:omega_0}
 \end{align}
 as it can be seen from Eq.~\eqref{eq:omega12}. 
Then, the DDE \eqref{eq:DDE} with \eqref{eq:TwovarDDE} and $\det (B) = 0$ is of class III if and only if equation \eqref{eq:omega_0} possesses a pair of simple roots $\omega_H$ and $-\omega_H$. The latter is equivalent to the condition $(\Tr(A))^2-(\Tr(B))^2<2\det(A) $. Hence, we obtain the following result.
	\begin{lem}
		\label{lem:5-1}
		The tw-variable DDE \eqref{eq:DDE} with \eqref{eq:TwovarDDE} and $\det (B) = 0$ is of class III if and only if 
  \begin{equation} 
        |\det (A)| =|C|, \quad (\Tr(A))^2-(\Tr(B))^2<2\det(A).
  \end{equation}
	\end{lem}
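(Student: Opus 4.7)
The plan is to exploit the fact that when $\det(B)=0$ the generating polynomial \eqref{eq:YTwo} is linear in $Y$ and hence yields the single ACS branch \eqref{eq:TwoVar_gamma1}. Consequently, the clause in Definition \ref{as:uc3} asking that no other branch $\gamma_k$ vanish is vacuous, and the whole proof reduces to analyzing the zero set and sign of one real-valued function $\gamma(\omega)$.

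For the origin, the formula \eqref{eq:gamma0} immediately gives $\gamma(0)=0 \iff |\det(A)|=|C|$, which is the first stated condition. For nonzero crossings I would invoke the biquadratic identity \eqref{eq:omega12}; under $(\det A)^2 = C^2$ its constant term drops out and it factors as
\begin{equation*}
\omega_H^{2}\bigl(\omega_H^{2} - [\,2\det(A) - (\Tr(A))^{2} + (\Tr(B))^{2}\,]\bigr) = 0,
\end{equation*}
so a simple pair $\pm\omega_H$ of nonzero real roots of $\gamma$ exists if and only if $(\Tr(A))^{2}-(\Tr(B))^{2} < 2\det(A)$, which is the second stated condition.

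Next I would verify the sign pattern required by Definition \ref{as:uc3}. Writing $f(\omega) = |i\omega\Tr(A)+\omega^{2}-\det(A)|^{2}$ and $g(\omega)=|C-i\omega\Tr(B)|^{2}$, one has $\sgn\gamma(\omega) = \sgn(g(\omega)-f(\omega))$, and a direct algebraic calculation under $C^{2}=(\det A)^{2}$ produces
\begin{equation*}
g(\omega)-f(\omega) \;=\; \omega^{2}\bigl(\omega_{H}^{2} - \omega^{2}\bigr),
\end{equation*}
which is positive precisely on $(-\omega_H,0)\cup(0,\omega_H)$ and negative for $|\omega|>\omega_H$, exactly matching Definition \ref{as:uc3}. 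Simpleness of the zero at $\omega_H$ (hence the nonvanishing derivative needed to invoke Theorem \ref{thm:Trans}) is immediate from this factorization.

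The necessity direction is obtained by reversing these steps: class III forces $\gamma(0)=0$, whence $|\det(A)|=|C|$, and the existence of a further simple zero $\omega_H>0$ of $\gamma$ then forces $2\det(A)-(\Tr(A))^{2}+(\Tr(B))^{2}$ to be strictly positive. The only subtle point I anticipate is the behavior at $\omega=0$: the calculation above shows that $g-f$ vanishes there quadratically in $\omega$, so $\gamma$ only \emph{touches} zero from above rather than crossing it. This is consistent with Definition \ref{as:uc3}, which demands $\gamma_j(0)=0$ together with strict positivity on the two one-sided intervals but does not require transversality at the origin, so no additional condition needs to be imposed and both implications close up.
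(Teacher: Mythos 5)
Your proposal is correct and follows essentially the same route as the paper: reduce to the single ACS branch available when $\det(B)=0$, read off $\gamma(0)=0\iff|\det(A)|=|C|$ from \eqref{eq:gamma0}, and factor the biquadratic \eqref{eq:omega12} once its constant term vanishes to get the condition $(\Tr(A))^2-(\Tr(B))^2<2\det(A)$ for a simple pair $\pm\omega_H$. Your explicit verification that $g(\omega)-f(\omega)=\omega^{2}(\omega_H^{2}-\omega^{2})$, which pins down the sign pattern of $\gamma$ and the tangential touch at $\omega=0$ required by Definition \ref{as:uc3}, is a step the paper leaves implicit and is a welcome addition.
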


As a numerical example of universality class III with $\det(B)= 0$ 
we consider
\begin{align}
		A=\left(\begin{array}{cc}
			1 & -2\\
			4 & -3
		\end{array}\right),\quad B=\left(\begin{array}{ll}
			2 & -1\\
			-2 & 1
		\end{array}\right),
  \label{eq:ExUC3_1}
	\end{align}
 for which we have $\omega_H= 3.873$ and $\phi_{H}=-1.3181$. Then the destabilizing transverse crossings of the critical roots occur for the following time delays: 
	\begin{align*}
		\tau_{k}=-0.34+1.621 k, \quad k=0,1,2,\dots.
	\end{align*}
 Figure \ref{Fig:UC3_1} shows the roots of the characteristic equation for  $\tau_{10}=15.87$ together with the ACS.
 \begin{figure}[H]
		\centering
		\includegraphics[width=7.5cm]{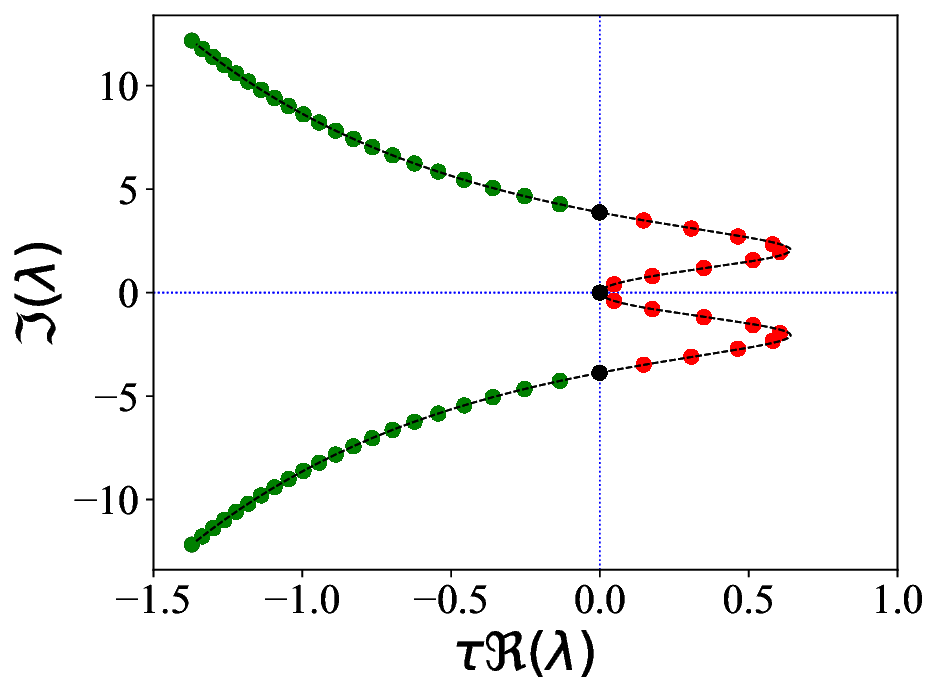}
		\caption{The spectrum of the two-variable delay system \eqref{eq:DDE}--\eqref{eq:TwovarDDE} with the coefficients \eqref{eq:ExUC3_1} belongs to universality class III. An example of the spectrum for $\tau=\tau_{10}=15.87$ and the ACS are shown.}
		\label{Fig:UC3_1}
	\end{figure}

\subsection{Two-variable example of DDE of universality class III with \texorpdfstring{$\det (B) \ne 0$}. }

We obtain the following two necessary conditions for a DDE to be of universality class III with $\det(A+B)=0$ and $\det(A-B)=0$ respectively. Although it looks like these conditions are also sufficient, we were not able to prove this. 
 \begin{lem}\label{lem:5-2}
    Let DDE \eqref{eq:TwovarDDE} with $\det (B) \ne 0$ and $\det(A+B)=0$.    
    belongs to the universality class III. Then the following conditions hold 
    \begin{align}\label{eq:condition1}
        &|\det(A)|>|\det(B)|,\\
                \label{eq:condition2}
        &2\left[\nu_2-\nu_1\Tr(B)+\left(\frac{\nu_1}{\nu_2}\right)^2\det(B)\right]+\nu^2_1<0,
    \end{align}
   where 
   \begin{align}
       \nu_1=\Tr(A)+\Tr(B), \quad \nu_2=\det(B)-\det(A).
    \end{align}
 \end{lem}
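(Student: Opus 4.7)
The plan is to extract the two stated conditions as necessary consequences of the class III definition applied to the generating polynomial \eqref{eq:YTwo} at $\omega=0$ and its leading-order behavior nearby. Since $\det(A+B)=0$, we have $C=-\det(A)-\det(B)$, and the generating polynomial at $\omega=0$ factors as $p_0(Y)=\det(B)(Y-1)(Y-\det(A)/\det(B))$. Its two roots are therefore $Y_1(0)=1$, which yields $\gamma_1(0)=0$ as required by class III, and $Y_2(0)=\det(A)/\det(B)$. The class III definition insists that no other branch of the ACS is critical, so $\gamma_2(\omega)<0$ for all $\omega$; in particular $\gamma_2(0)=-\ln|\det(A)/\det(B)|<0$, which forces $|\det(A)|>|\det(B)|$. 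This is condition \eqref{eq:condition1} and, as a bonus, it also guarantees $\nu_2=\det(B)-\det(A)\ne 0$, which I will need below.

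For condition \eqref{eq:condition2}, I would use the fact that class III requires $\gamma_1(\omega)>0$ on a punctured neighborhood of $0$; together with $\gamma_1(0)=0$ this means $0$ is a local minimum of $\gamma_1$, equivalently a local maximum of $|Y_1(\omega)|^2$ with value $1$. The key step is to compute the Taylor expansion of $|Y_1(\omega)|^2$ to second order in $\omega$ by implicitly differentiating $P(Y,\omega):=\det(B)Y^2+[C-i\omega\Tr(B)]Y+\det(A)-i\omega\Tr(A)-\omega^2$ at $(Y,\omega)=(1,0)$. Since $P_Y(1,0)=2\det(B)+C=\nu_2\ne 0$, the branch $Y_1$ is analytic at $0$. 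A short calculation gives $Y_1'(0)=-P_\omega/P_Y=i\nu_1/\nu_2$, which is purely imaginary, and from $P_{YY}(Y_1')^2+2P_{Y\omega}Y_1'+P_{\omega\omega}+P_Y Y_1''=0$ one obtains a real value of $Y_1''(0)$ expressible in $\nu_1$, $\nu_2$, $\Tr(B)$ and $\det(B)$.

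Writing $Y_1(\omega)=1+(i\nu_1/\nu_2)\omega+\tfrac{1}{2}Y_1''(0)\omega^2+O(\omega^3)$, the $\omega$-coefficient of $|Y_1|^2$ vanishes automatically because $\Re Y_1'(0)=0$, consistent with the symmetry $\omega\mapsto-\omega$ imposed by real coefficients. Reading off the $\omega^2$-coefficient of $|Y_1(\omega)|^2-1$ and demanding that it be strictly negative (so that $|Y_1(\omega)|^2<1$ on both sides of $0$, equivalently $\gamma_1>0$ there) gives an inequality which, after multiplying by $\nu_2^2>0$, rearranges into \eqref{eq:condition2}. Both conditions are thus obtained as strict inequalities on the quadratic term of the expansion.

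The main obstacle, in my view, is not the mechanical calculation but exactly the point the paper flags: these conditions are necessary but, apparently, not known to be sufficient. Sufficiency would additionally require global control of the two branches $\gamma_{1,2}$ on all of $\mathbb{R}$, in particular the existence of a single positive simple zero $\omega_H$ of $\gamma_1$ with $\gamma_1<0$ outside $[-\omega_H,\omega_H]$ and the strict negativity of $\gamma_2$ everywhere. These global properties are not captured by the local second-order expansion at $\omega=0$ and would require analyzing the quartic resolvent of \eqref{eq:YTwo} together with its asymptotics as $\omega\to\infty$, which is presumably why the statement is restricted to necessary conditions.
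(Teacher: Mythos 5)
Your proposal follows essentially the same route as the paper: identify the branch with $Y(0)=1$ forced by $\det(A+B)=0$, deduce condition \eqref{eq:condition1} from the requirement that the other branch satisfy $\gamma(0)<0$ (your direct factorization $p_0(Y)=\det(B)(Y-1)\bigl(Y-\det(A)/\det(B)\bigr)$ is a slightly cleaner shortcut than the paper's computation via the discriminant $\eta$, but it is the same fact), and obtain condition \eqref{eq:condition2} by implicit differentiation of the generating polynomial at $(Y,\omega)=(1,0)$, giving $Y'(0)=i\nu_1/\nu_2$ and a real $Y''(0)$, and then imposing that the second-order coefficient of $|Y(\omega)|^2-1$ be negative, which is exactly the paper's condition $\gamma''(0)>0$. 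Your closing remarks on why sufficiency is out of reach also match the paper's own caveat, so the proposal is correct and essentially the paper's argument.
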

 \begin{proof}
     From Eq.~\eqref{eq:TwoVar_Y2}, the roots of the generating polynomial \eqref{eq:YTwo} at $\omega=0$ are
	\begin{align}
		Y_{1,2}(0)=\frac{1}{2\det(B)}\left(-C\pm
  \sqrt{\eta}\right),\label{eq:UC3_Y2_0}
	\end{align}
 and  the values of the asymptotic continuous spectrum at $\omega=0$ are 
\begin{equation}
    \gamma_{1,2}(0) = 
		-\frac{1}{2}\ln\left|\frac{1}{4(\det(B))^2}\left(|C| \pm\sqrt{\eta}\right)^2\right|. \label{eq:UC3_gamma2_0}
	\end{equation}
Since $p_{0}(1)=0$ is equivalent to $\det(A+B)=0$, where  $p_{\omega}(Y)$ is the generating polynomial \eqref{eq:Y}, the condition  $\det(A+B)=0$ implies $Y_2(0)=1$ and $\gamma_2(0)=0$. 
The necessary condition for the universality class III is that the ACS of DDE \eqref{eq:TwovarDDE} consists of two curves such that $\gamma_1(0) <\gamma_2(0)=0$. Hence, we obtain $\eta > 0$,
$Y_2(0)=1$, $|Y_1(0)|>1$, and
\begin{align}
\left(|C| - \sqrt{\eta}\right)^2
= {4(\det(B))^2} <
\left(|C| + \sqrt{\eta}\right)^2.
 \label{eq:suffCond0}
\end{align}
Using the expressions for $C$ and $\eta$, as well as the condition $\det (A+B)=0$, the inequality \eqref{eq:suffCond0} is reduced to $|\det(A)|>|\det(B)|$.

Next, we compute the second derivative of the branch of ACS $\gamma(\omega)$ at $\omega=0$. For this we differentiate  \eqref{eq:YTwo} with respect to $\omega$ as
\begin{align}
    -2\omega-i\left[\Tr(A)+Y_2\Tr(B)\right]-i\omega  \Tr (B)
 \partial_{\omega}Y_2+2Y_2\det(B)\partial_{\omega}Y_2+C\partial_{\omega}Y_2 =0.
    \label{eq:oneD}
\end{align}
Substituting $Y_2 = 1$, $\omega = 0$ and $\det(A+B)=0$ in \eqref{eq:oneD}, we obtain
\begin{align}
\partial_{\omega}Y_2(0)=i\frac{\Tr(A)+\Tr(B)}{\det(B)-\det(A)}=i\frac{\nu_1}{\nu_2}.
\label{eq:oneD_0}
\end{align}
The second derivative of \eqref{eq:YTwo} with respect to $\omega$ is
\begin{align}
    -2-2i\Tr(B)\partial_{\omega}Y_2-i\omega\Tr(B)\partial_{\omega\omega}Y_2+2\left[Y_2\partial_{\omega\omega} Y_2+(\partial_{\omega}Y_2)^2\right]\det(B)+C\partial_{\omega\omega}Y_2=0,
    \label{eq:twoD}
\end{align}
which leads for $Y_2= 1$, $\omega = 0$, and $\det(A+B)=0$ to 
\begin{align}
\partial_{\omega\omega}Y_2(0)=\frac{2}{\nu_2}\left[1-\frac{\nu_1}{\nu_2}\Tr(B)+\left(\frac{\nu_1}{\nu_2}\right)^2\det(B)\right].
\label{eq:twoD_0}
\end{align}
Then, the second derivative of $\gamma$ is
\begin{align*}
  \frac{d^2\gamma_2}{d\omega^2}(0)=\left|\partial_{\omega}Y_2(0)\right|^2-\left|\partial_{\omega\omega}Y_2(0)\right|.
\end{align*}

For the universality class III, it is necessary that $\gamma_2''(0) >0$ leading to 
$\left|\partial_{\omega}Y_2(0)\right|^2>\partial_{\omega\omega}Y_2(0)$. The latter inequality, combined with \eqref{eq:suffCond0}, \eqref{eq:oneD_0}, and \eqref{eq:twoD_0}, give the conditions of the lemma.
\end{proof}

\begin{rem}
The conditions of Lemma \ref{lem:5-2} are reduced to $\det(A)>\det(B)$ in the case of $\partial_{\omega}Y_2(0)=0$ (i.e. $\nu_1=0$).
\end{rem}
 
We give a numerical example of universality class III with $\det(B)\ne 0$, $\det(A+B)= 0$, and 
 \begin{align}
		A=\left(\begin{array}{cc}
			1 & -2\\
			4& -2.2
		\end{array}\right),\quad B=\left(\begin{array}{ll}
			-1 & -0.2\\
			-4 & 3
		\end{array}\right).
  \label{eq:EXUC3_2}
	\end{align}
We obtain  $\det(A)=5.8$, $\det(B)=-3.8$, $\Tr(A)=-1.2$, $\Tr(B)=2$, $\nu_1=0.8$, and $\nu_2=-9.6$. it is easy to verify that conditions  \eqref{eq:condition1} and \eqref{eq:condition2} are satisfied. 
Further, we calculate $\phi_{H} = -1.379$, $\omega_{H} =  4.13$, and the sequence of the delay-induced transverse crossings of the characteristic roots occur at the time-delays
\[
\tau_k = -0.334+ 1.521k,\quad k=0,1,2,\cdots.
 \]
 Figure \ref{fig:UC3_2} shows the roots of the characteristic equation of the above two-variable linear system and its spectrum curves for $\tau=\tau_{20}=30.086$.
  \begin{figure}[H]
		\centering
\subfigure{\includegraphics[width=7.5cm]{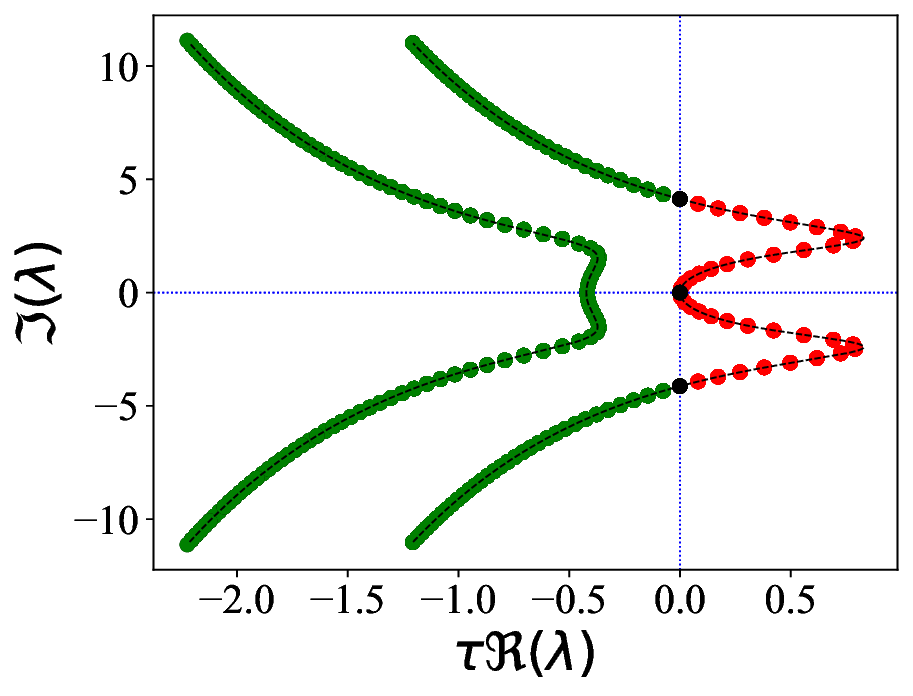}}
		\caption{
  Example of the spectrum for two-variable delay system \eqref{eq:TwovarDDE}.
  The coefficients are given in \eqref{eq:EXUC3_2} and the time-delay value is $\tau=\tau_{20}=30.086$.}
		\label{fig:UC3_2}
	\end{figure}

The second necessary condition is given by the following result.
\begin{lem}\label{lem:5-3}
    Let the DDE \eqref{eq:TwovarDDE} with $\det (B) \ne 0$ and $\det(A-B)=0$ belongs to universality class III. Then the following conditions hold
     \begin{align}\label{eq:condition3}
       & C^2>\max\big\{4\det(A)\det(B), 2\det(A)\det(B)+2(\det(B))^2\big\}, \\ 
               \label{eq:condition4}
        &2\left[\nu_4+\nu_3\Tr(B)+\left(\frac{\nu_3}{\nu_4}\right)^2\det(B)\right]-\nu^2_3 > 0, 
    \end{align}
    where
\begin{align}
\label{eq:nus}
\nu_3 =  \Tr(A)-\Tr(B), \nu_4 =  \det(A+B)  - \det(A) - 3\det(B).
\end{align}
\end{lem}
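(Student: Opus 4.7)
The plan is to mirror the proof of Lemma \ref{lem:5-2} above, adapted to the case $\det(A-B)=0$. The key observation is that $\det(A-B)=0$ is equivalent to $p_0(-1)=0$, where $p_\omega(Y)$ is the generating polynomial \eqref{eq:Y}. Hence, at $\omega=0$ one of the two roots of the quadratic \eqref{eq:YTwo}, which I label $Y_2(0)$, equals $-1$, and correspondingly $\gamma_2(0)=-\ln|Y_2(0)|=0$, giving the vanishing of the ACS at $\omega=0$ required by Definition \ref{as:uc3}. The constraint also yields $C=\det(A)+\det(B)$, which I would use throughout the calculation.

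First I would establish the inequality \eqref{eq:condition3}. For class III, the other branch must remain stable at $\omega=0$, i.e.\ $\gamma_1(0)<0$, equivalently $|Y_1(0)|>1$. From the product of the roots of \eqref{eq:YTwo} one has $Y_1(0)=-\det(A)/\det(B)$, whence $|\det(A)|>|\det(B)|$. Combined with $\eta>0$ (needed so that the two roots are simple and real at $\omega=0$; otherwise $\gamma_1(0)=\gamma_2(0)$, contradicting class III), and substituting $C=\det(A)+\det(B)$, these two requirements become exactly \eqref{eq:condition3}: the first bound $C^2>4\det(A)\det(B)$ is $\eta>0$, and the second bound $C^2>2\det(A)\det(B)+2(\det(B))^2$ is equivalent to $|\det(A)|>|\det(B)|$ after substitution.

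Second, I would derive the curvature condition \eqref{eq:condition4} in the manner of Lemma \ref{lem:5-2}. Differentiating \eqref{eq:YTwo} once and evaluating at $Y_2=-1$, $\omega=0$, and $C=\det(A)+\det(B)$, the surviving terms rearrange to
\begin{equation*}
\partial_{\omega}Y_2(0)=\frac{i(\Tr(A)-\Tr(B))}{C-2\det(B)}=\frac{i\nu_3}{\nu_4},
\end{equation*}
where $\nu_4=C-2\det(B)=\det(A+B)-\det(A)-3\det(B)$ is consistent with \eqref{eq:nus}. A second differentiation of \eqref{eq:YTwo}, followed by substitution of $Y_2(0)=-1$, $\omega=0$, and $\partial_{\omega}Y_2(0)=i\nu_3/\nu_4$, yields an explicit real expression for $\partial_{\omega\omega}Y_2(0)$ of the form $(2/\nu_4)[1-(\nu_3/\nu_4)\Tr(B)+(\nu_3/\nu_4)^2\det(B)]$. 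Computing $\gamma_2''(0)$ from $\gamma_2=-\tfrac{1}{2}\ln(Y_2\overline{Y_2})$ using $Y_2(0)=-1$ (real) and $\partial_{\omega}Y_2(0)$ (purely imaginary), and imposing $\gamma_2''(0)>0$, which is necessary because $\omega=0$ must be a local minimum of $\gamma_2$ for the class III geometry in Fig.~\ref{fig:UC3}, produces the inequality \eqref{eq:condition4}.

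The main obstacle I expect is sign bookkeeping: the substitution $Y_2(0)=1\mapsto-1$ propagates nontrivially through both the implicit-differentiation calculation of $\partial_{\omega\omega}Y_2(0)$ and through the expression $uu''+vv''$ that appears when evaluating $\gamma_2''(0)$ in real and imaginary parts. One must check carefully that the resulting inequality takes precisely the stated form of \eqref{eq:condition4}, including the opposite overall direction ($>$) compared with Lemma \ref{lem:5-2} ($<$), which arises from the change of sign in $Y_2(0)$. A secondary subtlety, consistent with the remark preceding the lemma, is that this argument yields only necessary conditions; sufficiency would require a global analysis of the two ACS branches on all of $\mathbb{R}$ and is not asserted.
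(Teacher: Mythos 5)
Your proposal follows essentially the same route as the paper's proof: $\det(A-B)=0$ gives $p_0(-1)=0$, hence $Y_2(0)=-1$ and $\gamma_2(0)=0$; the requirements $\gamma_1(0)<0$ (via $|Y_1(0)|=|\det(A)/\det(B)|>1$) and $\eta>0$ yield \eqref{eq:condition3}; and implicit differentiation of \eqref{eq:YTwo} at $Y_2=-1$, $\omega=0$ together with the curvature condition $\gamma_2''(0)>0$ yields \eqref{eq:condition4}. One remark: your formula $\partial_{\omega\omega}Y_2(0)=\frac{2}{\nu_4}\left[1-\frac{\nu_3}{\nu_4}\Tr(B)+\left(\frac{\nu_3}{\nu_4}\right)^2\det(B)\right]$ differs from the paper's \eqref{eq:twoD_1} in the sign of the middle term, and your sign is the one that actually follows from evaluating \eqref{eq:twoD} at $Y_2=-1$, so the sign-bookkeeping caution you raise is warranted.
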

 \begin{proof}
 
The necessary condition for the universality class III is that the ACS of DDE \eqref{eq:DDE}, \eqref{eq:TwovarDDE} consists of two curves such that $\gamma_1(0) <\gamma_2(0)=0$. Hence we have  $\eta > 0$ (see eq.~\eqref{eq:twovaromega0}). Also $\det(A-B)=0$ implies $Y_2(0)=-1$, since $p_0(-1)=\det(A-B)=0$. 
The condition $\gamma_1(0) <0$ implies $|Y_1(0)|>1$, and so, similar to the proof of Lemma~\ref{lem:5-2}, we can show that 
\eqref{eq:suffCond0} holds. 
Simple calculations show that \eqref{eq:suffCond0} is equivalent to $C^2>2\det(A)\det(B)+2(\det(B))^2$. 
Furthermore, $\eta > 0$ is equivalent to $C^2>4\det(A)\det(B)$. Hence, we have shown the condition \eqref{eq:condition3}.

Next, substituting $Y_2 = -1$, $\omega = 0$ into \eqref{eq:oneD}, we obtain
\begin{align}
\partial_{\omega}Y_2(0)=i\frac{\Tr(A)-\Tr(B)}{\det(A+B)-\det(A)-3\det(B)} =
i\frac{\nu_3}{\nu_4}.
\label{eq:oneD_1}
\end{align}
Further, using \eqref{eq:twoD} with $Y_2= -1$ and $\omega = 0$, we obtain
\begin{align}
\partial_{\omega\omega}Y_2(0)=\frac{2}{\nu_4}\left[1 +\frac{\nu_3}{\nu_4}\Tr(B)+\left(\frac{\nu_3}{\nu_4}\right)^2\det(B)\right].
\label{eq:twoD_1}
\end{align}
Similar to the proof of Lemma \ref{lem:5-2}, we obtain
\begin{align*}
\frac{d^2\gamma_2}{d\omega^2}(0)=\left|\partial_{\omega}Y_2(0)\right|^2+\left|\partial_{\omega\omega}Y_2(0)\right|.
\end{align*}
Since the obtained second derivative must be positive, we have $\left|\partial_{\omega}Y_2(0)\right|^2+\partial_{\omega\omega}Y_2(0)>0$. Then, combining  \eqref{eq:suffCond0}, \eqref{eq:oneD_1}, and \eqref{eq:twoD_1},  straightforward calculations lead to the conditions \eqref{eq:condition3}--\eqref{eq:condition4}.
\end{proof}
\begin{rem}
Conditions of Lemma \ref{lem:5-3} are reduced to $C^2>\max\{4\det(A)\det(B), 2\det(A)\det(B)+2(\det(B))^2\}$ and $\nu_4>0$ in the case $\partial_{\omega}Y_2(0)=0$ (i.e. $\nu_3=0$).
\end{rem}
  We give here a numerical example of universality class III with $\det(B)\ne 0$, $\det(A-B)= 0$, and 
 \begin{align}
		A=\left(\begin{array}{cc}
			1 & -2\\
			4& -3
		\end{array}\right),\quad B=\left(\begin{array}{ll}
			1 & -0.5\\
			4 & -3
		\end{array}\right).
  \label{eq:EXUC3_3}
	\end{align}
It is easy to check that conditions of Lemma~\ref{lem:5-3} are satisfied, and ACS belongs to the universality class III with the critical delays
\[
\tau_k = 0.584+ 1.924k,\quad k=0,1,2,\cdots.
 \]
 Figure \ref{fig:UC3_3} shows the roots of the characteristic equation of the above two-variable linear system and its ACS.
  \begin{figure}[H]
		\centering
\subfigure{\includegraphics[width=7.5cm]{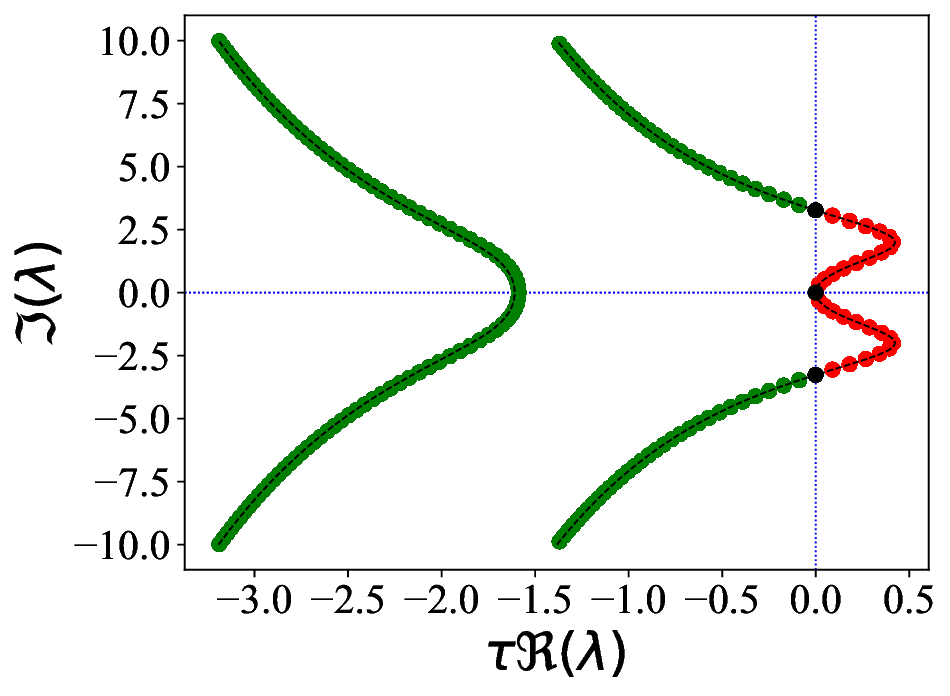}}
		\caption{
  Example of the spectrum for two-variable delay system \eqref{eq:TwovarDDE}.
  The coefficients are given in \eqref{eq:EXUC3_3} and the time-delay  is $\tau=\tau_{15}=29.44$.}
		\label{fig:UC3_3}
	\end{figure}
 
\section{Nonlinear model example: Stuart-Landau oscillator with time-delayed feedback}\label{sec7} 
\subsection{Asymptotic continuous spectrum analysis}\label{subsec6-1} 
In this section we consider the Stuart-Landau model  \cite{ Wolfrum2006, Fukuda2007, Fiedler2007, Yanchuk2009, Perlikowski2010a, Selivanov2012, zakharovaTimeDelayControl2013, Martin2016} with time-delayed feedback. 
This is a paradigmatic model for studying
the interplay between oscillatory instability (Hopf bifurcation) with a time delay. 
This example also illustrates how the Hopf bifurcation scenario in a DDEs of universality class II follows from our results.
The system reads
\begin{align}
    \dot z(t)=(\alpha+i\beta)z(t)-z(t)|z(t)|^2+z(t-\tau),
    \label{eq:SL}
\end{align}
where $\alpha$ and $\beta$ are real parameters, $z(t):\mathbb{R}\mapsto\mathbb{C}$ is a complex variable. The system has two real variables: the real and the imaginary parts of $z$.

We start with the stability analysis of the trivial equilibrium $z=0$ by considering the linearized system 
\begin{align}
    \dot z(t)=(\alpha+i\beta)z(t)+z(t-\tau).
    \label{eq:linearSL}
\end{align}
The corresponding characteristic equation is
\begin{align}
    \lambda-(\alpha+i\beta)-e^{-\lambda\tau}=0, \label{eq:CharaE}
\end{align}
and the generating polynomial 
\begin{align}
p_\omega(Y) := \det\left[i\omega-(\alpha+i\beta)-Y\right]=0\label{eq:Y_S}
\end{align}
has a single root 
\begin{align*}
    Y=i\omega-\alpha-i\beta.
\end{align*}
Hence the ACS reads 
\begin{equation}
\gamma(\omega)= -\ln|Y(\omega)|=-\frac{1}{2}\ln\left[ \alpha^2+(\omega-\beta)^2\right].\label{eq:ACS_S}
\end{equation}
We can see that the asymptotic continuous spectrum implies instability for $|\alpha| < 1$ \cite{Wolfrum2006}, see Fig. \ref{Fig:NonDDE}.
\begin{figure}[H]
		\centering		\subfigure{\includegraphics[width=7cm]{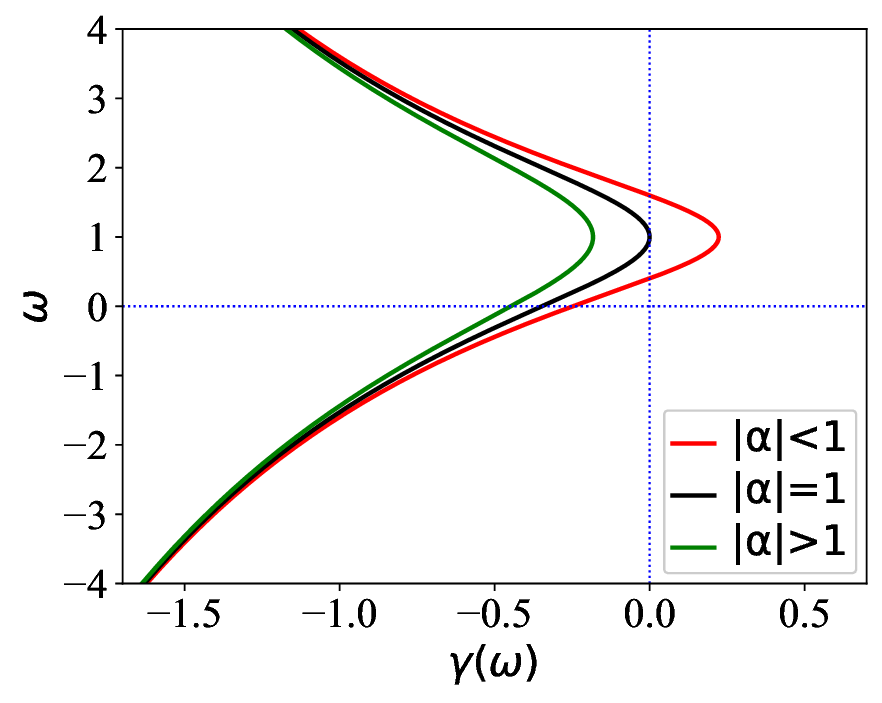}}
		\caption{ACS for Eq.~\eqref{eq:linearSL} with $\beta=1$  and different $\alpha$: unstable ($\alpha=0.8$), critical ($\alpha=1$), and stable  ($\alpha=1.2$). }
		\label{Fig:NonDDE}
	\end{figure}
 
 In this case, system \eqref{eq:linearSL} is an example of the  \textit{universality class II} DDE under the conditions of $|\alpha|<1$ and $\alpha^2+\beta^2>1$. 
 Since we consider the system in a complex form, only one curve of ACS is present.
If we had used the real form with two variables $\Re(z)$ and $\Im(z)$, we would obtain two complex conjugated curves of the ACS as in Fig.~\ref{fig:classII-1}(b).

The critical frequency $\omega_{H}$ is given by the root of
$
\gamma\left(\omega_{H}\right)=-\frac{1}{2}\ln\left[\alpha^{2}+(\omega_{H}-\beta)^{2}\right]=0,
$
leading to 
\begin{align*}
    \omega_{H_m}=\beta\pm\sqrt{1-\alpha^{2}}, \quad m=1,2.
\end{align*}
The phase $\phi_{H}$ is given as 
\[
\phi_{H_m}=-\arg\left[\pm i\sqrt{1-\alpha^{2}}-\alpha\right], \quad m=1,2.
\]

Corollary~\ref{thm:4.1} for the class II DDEs implies that there are transverse crossings of the characteristic roots of  \eqref{eq:CharaE} with $\pm i\omega_{H_m}$  occur for $|\alpha|<1$ and the following time delays: 
\begin{align}
  \tau^{(m)}_{k}=\frac{1}{\omega_{H_m}}(\phi_{H_m}+2\pi k) =-\frac{1}{\beta\pm\sqrt{1-\alpha^2}}\left(\arg\left[\pm i\sqrt{1-\alpha^{2}}-\alpha\right]+2\pi k\right),\quad k=0,1,2,\cdots.\label{eq:tauk_scalar_2}
\end{align}
Moreover, $\tau^{(2)}_{k}$ are stabilizing and  $\tau^{(1)}_{k}$ destabilizing.
The time-delay values $\tau^{(m)}_{k}$ correspond to the stabilizing and destabilizing Hopf bifurcations of the Stuart-Landau system, respectively.  $i\omega_{H_{m}}$ are the frequencies of the Hopf bifurcations. 
Figure~\ref{Fig:Spec} illustrates the spectrum for these bifurcation moments. 



	\begin{figure}[H]
		\centering		\subfigure{\includegraphics[width=7.5cm]{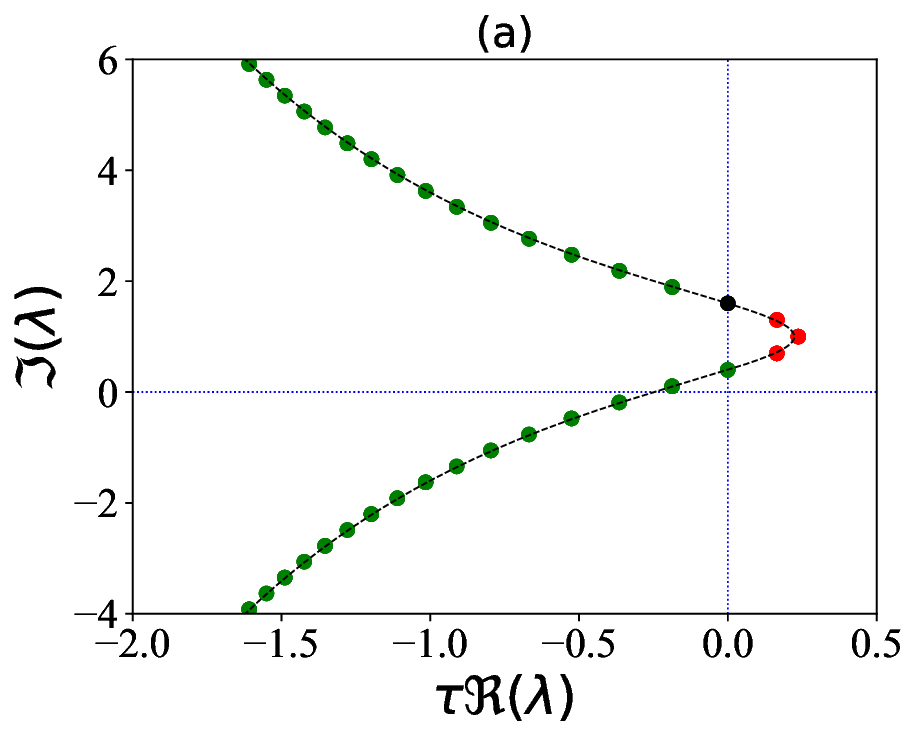}}\subfigure{\includegraphics[width=7.5cm]{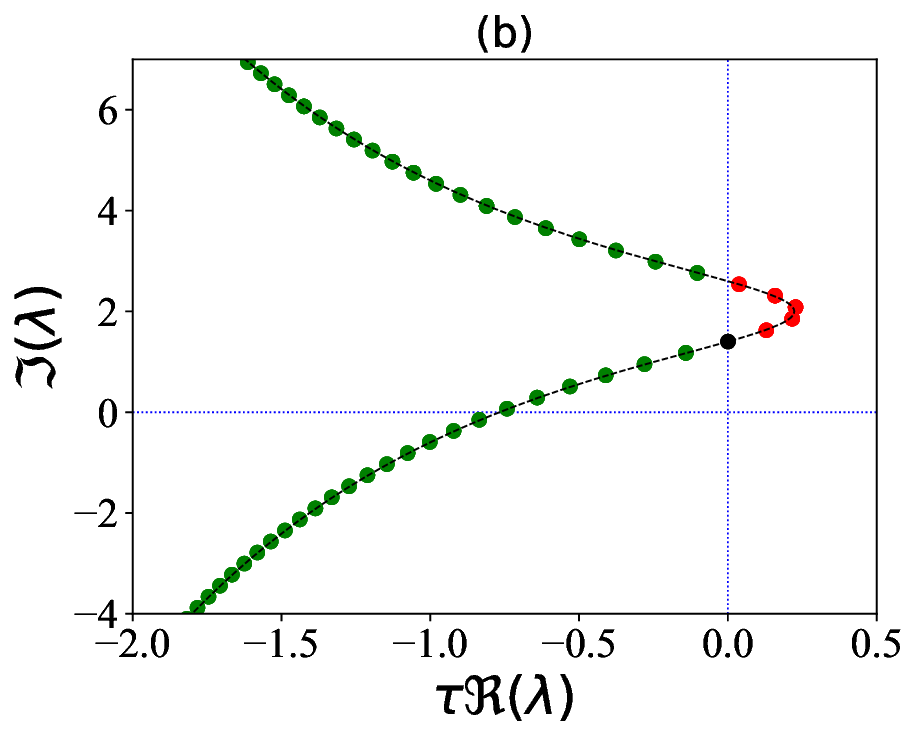}}
		\caption{Spectrum and the ACS for time-delayed system \eqref{eq:SL} for the following two cases: (a) $\alpha=0.8$, $\beta=1$, and $\tau=\tau_{6}^{(1)}= 21.989$; (b) $\alpha=0.8$, $\beta=2$, and  $\tau=\tau_{6}^{(2)}= 28.7$.}
		\label{Fig:Spec}
	\end{figure}

\subsection{Bifurcating periodic solutions and their stability}\label{subsec6-2} 
We now describe the bifurcation scenario with increasing $\tau$. If the ACS crosses the imaginary axis (the case of universality class II), Hopf bifurcations occur and lead to the appearance of periodic solutions. 
Due to the equivariance of \eqref{eq:SL} with respect to the transformation $z\to z e^{i\phi}$ for any $\phi\in S^1$, typical periodic solutions in \eqref{eq:SL} have the form $z(t)=ae^{i\omega t}$ with the amplitude $a$ and frequency $\omega$. 
Substituting this Ansatz into equation \eqref{eq:SL}, we obtain
\begin{align*}
    i\omega_{H}=\alpha+i\beta-a^2+e^{-i\omega\tau},
\end{align*}
or, equivalently,
\begin{align}
 \label{eq:a}
    a&=\sqrt{\alpha+\cos\left(\omega\tau\right)},\\
    \omega&=\beta-\sin\left(\omega\tau\right).\label{eq:tau}
\end{align}
Using \eqref{eq:a}--\eqref{eq:tau}, the amplitudes $a$ versus time-delay $\tau$ of the periodic solutions can be parametrically represented as $a(\phi)$ and $\tau(\phi)$, where
\begin{align}
\label{eq:a_phi}
    a(\phi)&=\sqrt{\alpha+\cos\left(\phi\right)},\\
    \tau(\phi)&=\frac{\phi+2\pi k}{\beta-\sin\left(\phi\right)}, \quad k\in 0,1,2,\dots\label{eq:tau_phi}
\end{align}

\paragraph{Case $|\alpha|<1$} In this case  DDE \eqref{eq:SL} is of universality class II.
 Figure \ref{Fig:Bif} shows the resulting diagrams of the amplitudes $a$ with respect to time delay $\tau$. 
 We observe the Hopf bifurcations as predicted by Eq.~\eqref{eq:tauk_scalar_2}.
The Hopf bifurcations also correspond to the condition $a=0$, i.e., $\alpha+\cos(\phi_m)=0$, $m=1,2$, and 
\begin{equation}
\tau_k^{(m)}=\frac{\phi_m+2\pi k}{\beta-\sin(\phi_m)}, k=0,1,2,\dots,
\end{equation}
where $\omega_m = \beta-\sin(\phi_m)$ is the Hopf frequency, see Eq.~\eqref{eq:tauk_scalar_2}.
The periodic solutions exist for $\alpha+\cos\left(\phi\right)>0$.
Moreover, as the Hopf bifurcations at $\tau_k^{(1)}$ are destabilizing,  the branch of periodic solutions is emerging from these points $(a=0,\tau=\tau_k^{(1)})$ in the bifurcation diagram.  Accordingly, the Hopf bifurcations at $\tau_k^{(2)}$ are stabilizing, and the branches are terminating there.

We note additionally that the branches of periodic solutions are bounded in $\tau$ for $\beta>1$. Hence, they form so-called bridges of periodic solutions connecting the corresponding Hopf bifurcation points as in Fig.~\ref{Fig:Bif}(a) and (b), see also \cite{Pieroux2001}. 
However, for $\beta= 1$, all branches are disconnected as in Fig.~\ref{Fig:Bif}(c). This can be understood from Eq.~\eqref{eq:tau_phi}, since the denominator approaches zero for some values of $\phi$. 
\begin{figure}[H]
\centering		
\subfigure{\includegraphics[width=7.5cm]{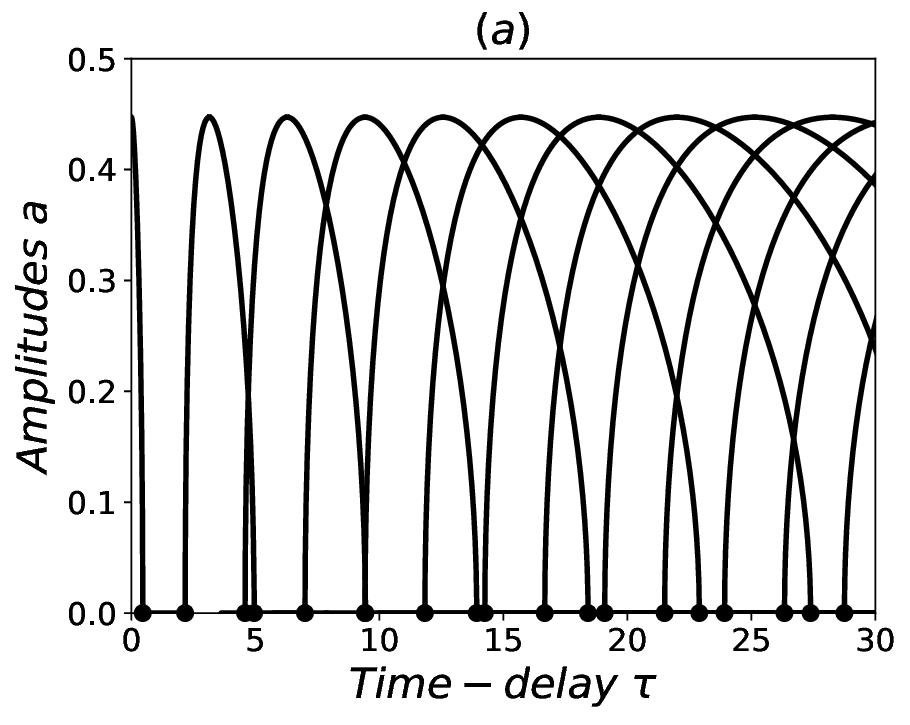}} 
\subfigure{\includegraphics[width=7.5cm]{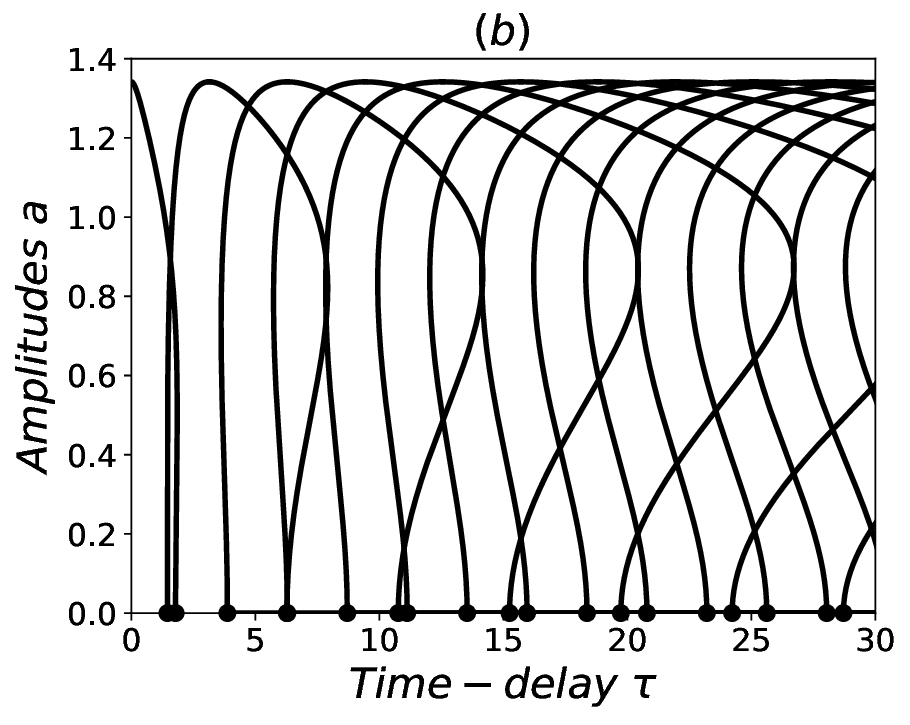}}
\subfigure{\includegraphics[width=7.5cm]{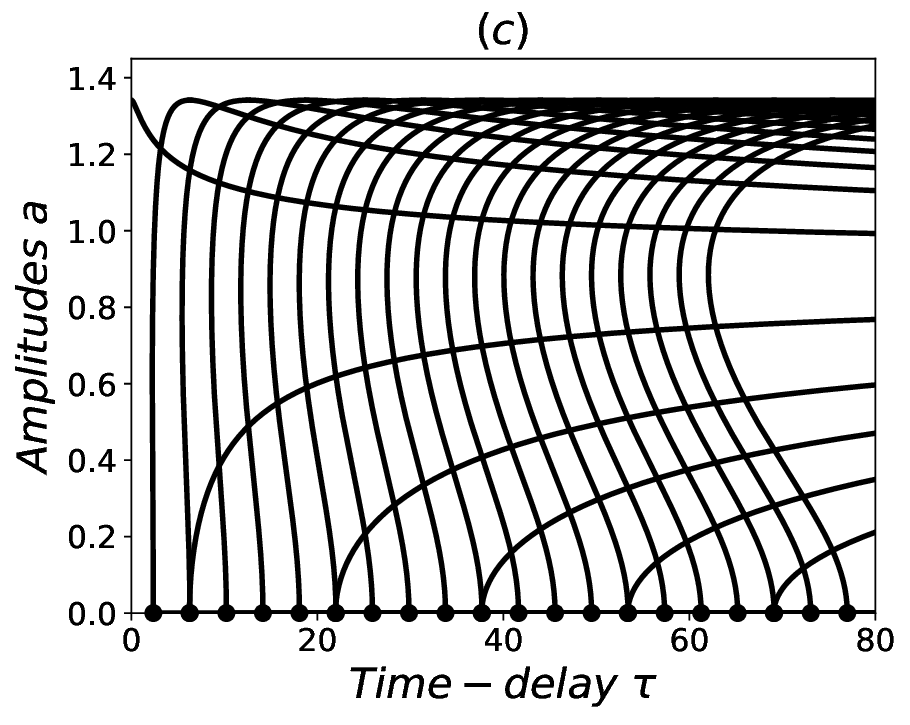}}
\caption{Dependence of the amplitude $a$  on the bifurcation parameter $\tau$ with $|\alpha|<1$. The parameters of the equations \eqref{eq:a_phi} and \eqref{eq:tau_phi} are respectively: (a) $\alpha=-0.8$, $\beta=2$; (b) $\alpha=0.8$, $\beta=2$; (c) $\alpha=0.8$, $\beta=1$. The branches of periodic solutions are bounded in $\tau$ for $\beta>1$, but all branches are disconnected for $\beta = 1$.}\label{Fig:Bif}
\end{figure}

\paragraph{Case $-\infty < \alpha < -1$} This case corresponds to the absolute (delay-independent) stability \cite{Yanchuk2022b} of the equilibrium $z=0$ and, hence, there are no Hopf bifurcations for positive delays $\tau$. 
If a periodic solution exists in this case, it can nether be continued to $\tau=0$ nor to any Hopf bifurcation with the fixed point. 

\paragraph{Case $\alpha\ge 1$}
For this case, there is one periodic solution for $\tau=0$, but no Hopf bifurcations for positive delays, see Fig.~\ref{Fig:Bif2}(a) and (b).
\begin{figure}[H]
\centering		
\subfigure{\includegraphics[width=7.5cm]{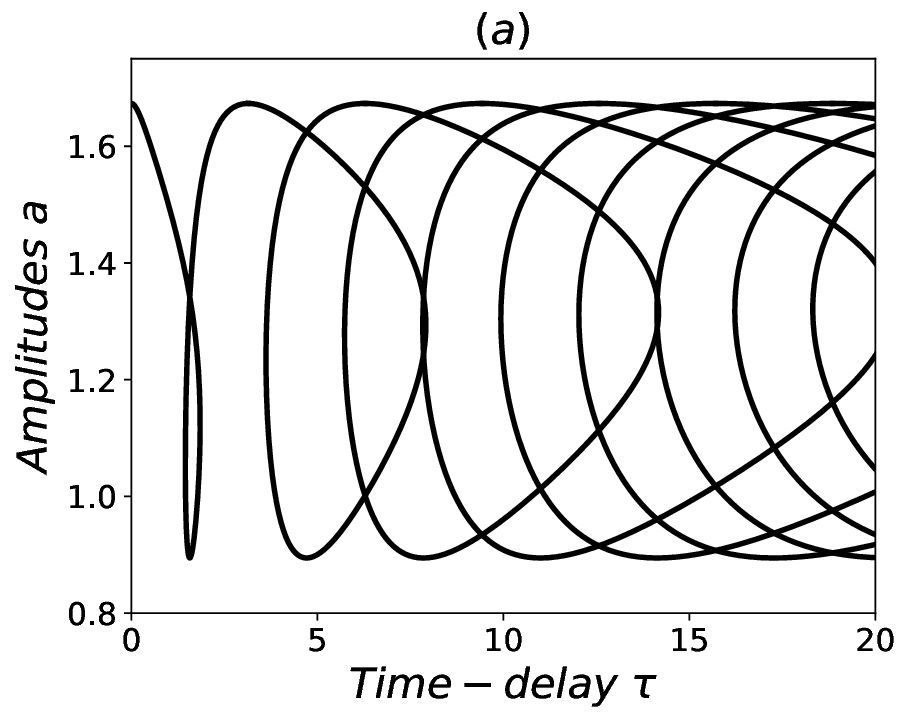}} 	
\subfigure{\includegraphics[width=7.5cm]{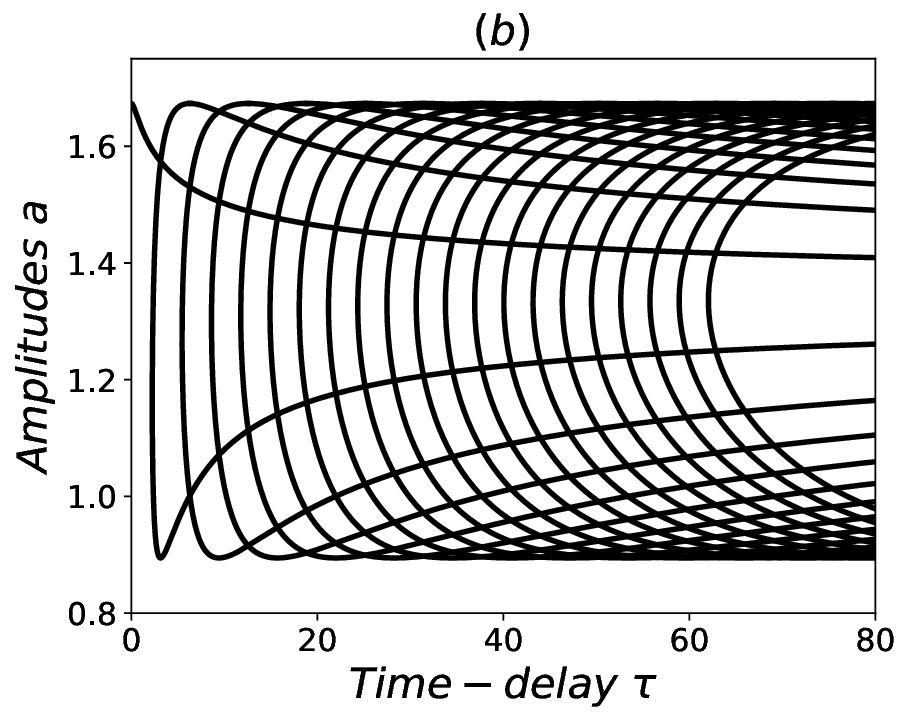}} 
\caption{Dependence of the amplitude $a$  on the bifurcation parameter $\tau$ with $\alpha\ge 1$. The parameters of the equations \eqref{eq:a_phi} and \eqref{eq:tau_phi} are respectively:(a) $\alpha=1.8$, $\beta=2$; (b) $\alpha=1.8$, $\beta=1$.  There are periodic solutions but no Hopf bifurcations for positive delays. The periodic solutions are connected for $\beta>1$ but disconnected for $\beta = 1$.}\label{Fig:Bif2}
	\end{figure}

\section{Conclusions}\label{sec8} 

We show that linear DDEs with one delay possess universal destabilization / stabilization scenarios as time-delay is varied. These scenarios lead to universal cascades of Hopf bifurcations in the corresponding nonlinear DDEs. 
The universality classes, corresponding to the same scenario, are determined with the help of the asymptotic continuous spectrum, which can be rather calculated.  

Although the number of potential universality classes is unlimited, we 
determine three most common universality classes (I to III) and describe their bifurcation scenarios. The universality classes are naturally extended by the class ``0" of absolutely hyperbolic DDEs introduced in \cite{Yanchuk2022b}, which do not exhibit any bifurcations for any value of delays. 

All our results are illustrated by numerous simulation examples. In addition, we present an example of the nonlinear Stuart-Landau model with time-delayed feedback and show how the ``bridges" or periodic orbits connect the stabilizing and destabilizing Hopf bifurcations. 

We believe that our results can be useful for studying many applied problems where the time-delay appears as a parameter. A possible interesting extension could be the study of time-varying or state-dependent delays, where the delay would traverse through a cascade of Hopf bifurcation, which we have described here.

\bibliographystyle{elsarticle-num}  

\begin{thebibliography}{10}
\expandafter\ifx\csname url\endcsname\relax
  \def\url#1{\texttt{#1}}\fi
\expandafter\ifx\csname urlprefix\endcsname\relax\def\urlprefix{URL }\fi
\expandafter\ifx\csname href\endcsname\relax
  \def\href#1#2{#2} \def\path#1{#1}\fi

\bibitem{Mallet-Paret1989}
J.~Mallet-Paret, R.~D. Nussbaum, A differential delay equations arising in
  optics and physiology, SIAM J. Math. Anal. 20 (1989) 249--292.

\bibitem{Erneux2009}
T.~Erneux, Applied {Delay} {Differential} {Equations}, Vol.~3, Springer, 2009.

\bibitem{Atay2010a}
F.~M. Atay, Complex time-delay systems: theory and applications, Springer,
  2010.

\bibitem{Soriano2013}
M.~C. Soriano, J.~García-Ojalvo, C.~R. Mirasso, I.~Fischer, Complex photonics:
  {Dynamics} and applications of delay-coupled semiconductors lasers, Rev. Mod.
  Phys. 85~(1) (2013) 421--470.

\bibitem{Yanchuk2019}
S.~Yanchuk, S.~Ruschel, J.~Sieber, M.~Wolfrum, Temporal dissipative solitons in
  time-delay feedback systems, Phys. Rev. Lett. 123~(5) (2019) 053901.

\bibitem{Kuang1993}
Y.~Kuang, Delay differential equations with applications in population
  dynamics, Vol.~35, Academic Press, 1993.

\bibitem{Hartung2006a}
F.~Hartung, T.~Krisztin, H.-O. Walther, J.~Wu, Chapter 5 functional
  differential equations with state-dependent delays: Theory and applications.
  handbook of differential equations: Ordinary differential equations, vol. 3,
  North-Holland 435 (2006) 545.

\bibitem{Diekmann2007}
O.~Diekmann, M.~Gyllenberg, Abstract delay equations inspired by population
  dynamics, Funct. Anal. Evol. Equ. (2008) 187--200.

\bibitem{Muller2015}
J.~Müller, C.~Kuttler, Methods and {Models} in {Mathematical} {Biology},
  Springer, 2015.

\bibitem{Agaba2017}
G.~Agaba, Y.~Kyrychko, K.~Blyuss, Dynamics of vaccination in a time-delayed
  epidemic model with awareness, Math. Biosci. 294 (2017) 92--99.

\bibitem{Young2019}
L.~S. Young, S.~Ruschel, S.~Yanchuk, T.~Pereira, Consequences of delays and
  imperfect implementation of isolation in epidemic control, Sci. Rep. 9~(1)
  (2019) 3505.

\bibitem{Tsao1993}
T.-C. Tsao, M.~W. McCarthy, S.~G. Kapoor, A new approach to stability analysis
  of variable speed machining systems, Int. J. Mach. Tools Manuf. 33~(6) (1993)
  791--808.

\bibitem{Insperger2006}
T.~Insperger, G.~Stépán, J.~Turi, State-dependent delay in regenerative
  turning processes, Nonlinear Dyn. 47~(1-3) (2006) 275--283.

\bibitem{InspergerStepan2011}
T.~Insperger, G.~Stépán, Semi-discretization for time-delay systems -
  {Engineering} applications, Springer, 2011.

\bibitem{Otto2013}
A.~Otto, G.~Radons, Application of spindle speed variation for chatter
  suppression in turning, CIRP J. Manuf. Sci. Technol. 6~(2) (2013) 102--109.

\bibitem{Otto2015}
A.~Otto, G.~Radons, The influence of tangential and torsional vibrations on the
  stability lobes in metal cutting, Nonlinear Dyn. 82~(4) (2015) 1989--2000.

\bibitem{Wu2001}
J.~Wu, Introduction to {Neural} {Dynamics} and signal {Transmission} {Delay},
  Vol.~6, Walter de Gruyter, 2001.

\bibitem{Izhikevich2006}
E.~M. Izhikevich, Polychronization: {Computation} with {Spikes}, Neural Comput.
  18~(2) (2006) 245--282.

\bibitem{Ko2007}
T.-W. Ko, G.~B. Ermentrout, Effects of axonal time delay on synchronization and
  wave formation in sparsely coupled neuronal oscillators, Phys. Rev. E
  (Statistical, Nonlinear, and Soft Matter Physics) 76~(5) (2007) 56206.

\bibitem{appeltantInformationProcessingUsing2011}
L.~Appeltant, M.~C. Soriano, G.~Van~der Sande, J.~Danckaert, S.~Massar,
  J.~Dambre, B.~Schrauwen, C.~R. Mirasso, I.~Fischer, Information processing
  using a single dynamical node as complex system, Nat. Commun. 2~(1) (2011)
  468.

\bibitem{Grigoryeva}
L.~Grigoryeva, J.~Henriques, L.~Larger, J.-P. Ortega, Optimal nonlinear
  information processing capacity in delay-based reservoir computers, Sci. Rep.
  5~(1) (2015) 12858.

\bibitem{Larger2017}
L.~Larger, A.~Bayl{\'o}n-Fuentes, R.~Martinenghi, V.~S. Udaltsov, Y.~K. Chembo,
  M.~Jacquot, High-speed photonic reservoir computing using a time-delay-based
  architecture: Million words per second classification, Phys. Rev. X 7~(1)
  (2017) 011015.

\bibitem{Keuninckx2017}
L.~Keuninckx, J.~Danckaert, G.~Van~der Sande, Real-time {Audio} {Processing}
  with a {Cascade} of {Discrete}-{Time} {Delay} {Line}-{Based} {Reservoir}
  {Computers}, Cogn. Comput. 9~(3) (2017) 315--326.

\bibitem{Stelzer2019}
F.~Stelzer, A.~Röhm, K.~Lüdge, S.~Yanchuk, Performance boost of time-delay
  reservoir computing by non-resonant clock cycle, Neural Netw. 124 (2020)
  158--169.

\bibitem{Koster2022b}
F.~Koster, S.~Yanchuk, K.~Ludge, Master {Memory} {Function} for {Delay}-{Based}
  {Reservoir} {Computers} {With} {Single}-{Variable} {Dynamics}, IEEE Trans.
  Neural Netw. Learn. Syst. (2022) 1--14.

\bibitem{Stelzer2021a}
F.~Stelzer, A.~Röhm, R.~Vicente, I.~Fischer, S.~Yanchuk, Deep neural netw.
  using a single neuron: folded-in-time architecture using feedback-modulated
  delay loops, Nat. Commun. 12~(1) (2021) 5164.

\bibitem{Furuhata2021}
G.~Furuhata, T.~Niiyama, S.~Sunada, Physical {Deep} {Learning} {Based} on
  {Optimal} {Control} of {Dynamical} {Systems}, Phys. Rev. Appl. 15~(3) (2021)
  034092.

\bibitem{Yanchuk2022b}
S.~Yanchuk, M.~Wolfrum, T.~Pereira, D.~Turaev, Absolute stability and absolute
  hyperbolicity in systems with discrete time-delays, J. Differ. Equ. 318
  (2022) 323--343.

\bibitem{Pontryagin1942}
L.~S. Pontryagin, On the zeros of some elementary transcendental functions,
  Izv. Akad. Nauk SSSR, Ser. Mat. 6 (1942) 115--134.

\bibitem{Elsgolz1971a}
L.~Elsgolz, S.~Norkin, Introduction to the theory of differential equations
  with retarded argument, Nauka, Moscow, 1971.

\bibitem{Brauer1987}
F.~Brauer, Absolute stability in delay equations, J. Differ. Equ. 69~(2) (1987)
  185--191.

\bibitem{Boese1995}
F.~G. Boese, Stability {Criteria} for {Second}-{Order} {Dynamical} {Systems}
  {Involving} {Several} {Time} {Delays}, SIAM J. Math. Anal. 26~(5) (1995)
  1306--1330.

\bibitem{Smith2010}
H.~L. Smith, An introduction to delay differential equations with applications
  to the life sciences, Vol.~57, Springer, 2011.

\bibitem{Li2016}
X.~Li, H.~Gao, K.~Gu, Delay-independent stability analysis of linear time-delay
  systems based on frequency discretization, Automatica 70 (2016) 288--294.

\bibitem{An2019}
Q.~An, E.~Beretta, Y.~Kuang, C.~Wang, H.~Wang, Geometric stability switch
  criteria in delay differential equations with two delays and delay dependent
  parameters, J. Differ. Equ. 266~(11) (2019) 7073--7100.

\bibitem{Ikeda1982}
K.~Ikeda, K.~Kondo, O.~Akimoto, Successive {Higher}-{Harmonic} {Bifurcations}
  in {Systems} with {Delayed} {Feedback}, Phys. Rev. Lett. 49~(20) (1982)
  1467--1470.

\bibitem{Culshaw2003}
R.~V. Culshaw, S.~Ruan, G.~Webb, A mathematical model of cell-to-cell spread of
  {HIV}-1 that includes a time delay, J. Math. Biol. 46~(5) (2003) 425--444.

\bibitem{Guo2008}
S.~Guo, Y.~Chen, J.~Wu, Two-parameter bifurcations in a network of two neurons
  with multiple delays, J. Differ. Equ. 244~(2) (2008) 444--486.

\bibitem{Yanchuk2017}
S.~Yanchuk, G.~Giacomelli, Spatio-temporal phenomena in complex systems with
  time delays, Journal of Physics A: Mathematical and Theoretical 50~(10)
  (2017) 103001.

\bibitem{Al-Darabsah2020}
I.~Al-Darabsah, S.~A. Campbell, A phase model with large time delayed coupling,
  Physica D: Nonlinear Phenomena 411 (2020) 132559.

\bibitem{Yanchuk2009}
S.~Yanchuk, P.~Perlikowski, Delay and periodicity, Phys. Rev. E 79~(4) (2009)
  046221.

\bibitem{Wolfrum2006}
M.~Wolfrum, S.~Yanchuk, Eckhaus {Instability} in {Systems} with {Large}
  {Delay}, Phys. Rev. Lett. 96~(22) (2006) 220201.

\bibitem{Yanchuk2010a}
S.~Yanchuk, M.~Wolfrum, A {Multiple} {Time} {Scale} {Approach} to the
  {Stability} of {External} {Cavity} {Modes} in the {Lang}–{Kobayashi}
  {System} {Using} the {Limit} of {Large} {Delay}, SIAM J. Appl. Dyn. Syst.
  9~(2) (2010) 519--535.

\bibitem{Lichtner2011}
M.~Lichtner, M.~Wolfrum, S.~Yanchuk, The {Spectrum} of {Delay} {Differential}
  {Equations} with {Large} {Delay}, SIAM J. Math. Anal. 43~(2) (2011) 788--802.

\bibitem{Sieber2013}
J.~Sieber, M.~Wolfrum, M.~Lichtner, S.~Yanchuk, On the stability of periodic
  orbits in delay equations with large delay, Discrete Contin. Dyn. Syst. Ser.
  A 33~(7) (2013) 3109--3134.

\bibitem{Yanchuk2015b}
S.~Yanchuk, L.~Lücken, M.~Wolfrum, A.~Mielke, Spectrum and amplitude equations
  for scalar delay-differential equations with large delay, Discrete Contin.
  Dyn. Syst. Ser. A 35~(1) (2015) 537--553.

\bibitem{Ruschel2021}
S.~Ruschel, S.~Yanchuk, The spectrum of delay differential equations with
  multiple hierarchical large delays, Discrete Contin. Dyn. Syst.Ser. S 14~(1)
  (2021) 151--175.

\bibitem{poignard_self-induced_2022}
C.~Poignard, Self-induced synchronization by large delay, J. Differ. Equ. 310
  (2022) 555--601.

\bibitem{Hale1993}
J.~K. Hale, S.~M.~V. Lunel, Introduction to {Functional} {Differential}
  {Equations}, Vol.~99, Springer-Verlag, 1993.

\bibitem{Mackey1977a}
M.~C. Mackey, L.~Glass, Oscillation and chaos in physiological control systems,
  Science 197 (1977) 287.

\bibitem{Weicker2012}
L.~Weicker, T.~Erneux, O.~D’Huys, J.~Danckaert, M.~Jacquot, Y.~Chembo,
  L.~Larger, Strongly asymmetric square waves in a time-delayed system, Phys.
  Rev. E 86~(5) (2012) 055201.

\bibitem{DHuys2008}
O.~D'Huys, R.~Vicente, T.~Erneux, J.~Danckaert, I.~Fischer, Synchronization
  properties of network motifs: {Influence} of coupling delay and symmetry,
  Chaos 18~(3) (2008) 37116.

\bibitem{Yanchuk2015a}
S.~Yanchuk, G.~Giacomelli, Dynamical systems with multiple long-delayed
  feedbacks: {Multiscale} analysis and spatiotemporal equivalence, Phys. Rev. E
  92~(4) (2015) 042903.

\bibitem{Fukuda2007}
H.~Fukuda, N.~Nakamichi, M.~Hisatsune, H.~Murase, T.~Mizuno, Synchronization of
  plant circadian oscillators with a phase delay effect of the vein network,
  Phys. Rev. Lett. 99~(9) (2007).

\bibitem{Perlikowski2010a}
P.~Perlikowski, S.~Yanchuk, O.~V. Popovych, P.~A. Tass, Periodic patterns in a
  ring of delay-coupled oscillators, Phys. Rev. E 82~(3) (2010) 036208.

\bibitem{Selivanov2012}
A.~A. Selivanov, J.~Lehnert, T.~Dahms, P.~Hövel, A.~L. Fradkov, E.~Schöll,
  Adaptive synchronization in delay-coupled networks of {Stuart}-{Landau}
  oscillators, Phys. Rev. E 85~(1) (2012) 16201.

\bibitem{Martin2016}
M.~J. Martin, O.~D'Huys, L.~Lauerbach, E.~Korutcheva, W.~Kinzel, Chaos
  synchronization by resonance of multiple delay times, Phys. Rev. E 93~(2)
  (2016).

\bibitem{Zhang2018}
Y.~Zhang, A.~E. Motter, Identical synchronization of nonidentical oscillators:
  {When} only birds of different feathers flock together, Nonlinearity 31~(1)
  (2018).

\bibitem{Lang1980}
R.~Lang, K.~Kobayashi, External optical feedback effects on semiconductor
  injection laser properties, IEEE J. Quantum Electron. 16 (1980) 347--355.

\bibitem{Alsing1996}
P.~M. Alsing, V.~Kovanis, A.~Gavrielides, T.~Erneux, Lang and {Kobayashi} phase
  equation, Phys. Rev. A 53 (1996) 4429--4434.

\bibitem{Heil1998}
T.~Heil, I.~Fischer, W.~Elsäßer, Coexistence of low-frequency fluctuations
  and stable emission on a single high-gain mode in semiconductor lasers with
  external optical feedback, Phys. Rev. E 58 (1998) R2672--R2675.

\bibitem{Mulet1999}
J.~Mulet, C.~R. Mirasso, Numerical statistics of power dropouts based on the
  \{{L}\}ang-\{{K}\}obayashi model, Phys. Rev. E 59 (1999) 5400--5405.

\bibitem{Pieroux2003}
D.~Pieroux, P.~Mandel, Low-frequency fluctuations in the {Lang}-{Kobayashi}
  equations, Phys. Rev. E 68 (2003).

\bibitem{Green2009}
K.~Green, Stability near threshold in a semiconductor laser subject to optical
  feedback: {A} bifurcation analysis of the {Lang}-{Kobayashi} equations, Phys.
  Rev. E 79~(3) (2009) 36210.

\bibitem{Dahms2010}
T.~Dahms, V.~Flunkert, F.~Henneberger, P.~Hövel, S.~Schikora, E.~Schöll,
  H.~J. Wünsche, Noninvasive optical control of complex semiconductor laser
  dynamics, The European Physical Journal - Special Topics 191~(1) (2010)
  71--89.

\bibitem{Wolfrum2010}
M.~Wolfrum, S.~Yanchuk, P.~Hövel, E.~Schöll, Complex dynamics in
  delay-differential equations with large delay, Eur. Phys. J-Spec. Top.
  191~(1) (2011) 91--103.

\bibitem{RoehmBoehmLuedge2016}
A.~Röhm, F.~Böhm, K.~Lüdge, Small chimera states without multistability in a
  globally delay-coupled network of four lasers, Phys. Rev. E 94~(4) (2016)
  42204.

\bibitem{Fiedler2007}
B.~Fiedler, V.~Flunkert, M.~Georgi, P.~Hovel, E.~Scholl, Refuting the
  {Odd}-{Number} {Limitation} of {Time}-{Delayed} {Feedback} {Control}, Phys.
  Rev. Lett. 98~(11) (2007) 114101.

\bibitem{zakharovaTimeDelayControl2013}
A.~Zakharova, I.~Schneider, Y.~N. Kyrychko, K.~B. Blyuss, A.~Koseska,
  B.~Fiedler, E.~Schöll, Time delay control of symmetry-breaking primary and
  secondary oscillation death, Europhys. Lett. 104~(5) (2013) 50004.

\bibitem{Pieroux2001}
D.~Pieroux, T.~Erneux, T.~Luzyanina, K.~Engelborghs, Interacting pairs of
  periodic solutions lead to tori in lasers subject to delayed feedback, Phys.
  Rev. E 63~(3) (2001) 36211.

\end{thebibliography}

\end{document}